        \newcommand{\addresseshere}{%
          \enddoc@text\let\enddoc@text\relax
        }
    \newcommand{\int@suppauth}{%
        \def\ifciteibid{\@secondoftwo}%
        \renewbibmacro*{cite:name}{}%
        \renewbibmacro*{cite:idem}{}%
        \renewbibmacro*{author}{}%
        \renewbibmacro*{editor+others}{}%
        \renewbibmacro*{translator+others}{}
    }
    \newcommand{\suppauth}{\AtNextCite{\int@suppauth}}
\theoremstyle{plain}
	\newtheorem{theorem}{Theorem}
	\newtheorem{lemma}[theorem]{Lemma}
	\newtheorem{proposition}[theorem]{Proposition}
\theoremstyle{definition}
\def\Ch{\mathrm{Ch}}
\begin{document}
\title{At most 10 cylinders mutually touch:\\ a Ramsey-theoretic approach}
\author[T. Dillon]{Travis Dillon}
\address{Massachusetts Institute of Technology, Cambridge, MA, USA}
\email{dillont@mit.edu}
\author[J. Koizumi]{Junnosuke Koizumi}
\address{RIKEN iTHEMS, Wako, Saitama 351-0198, Japan}
\email{junnosuke.koizumi@riken.jp}
\author[S. Luo]{Sammy Luo}
\address{Massachusetts Institute of Technology, Cambridge, MA, USA}
\email{sammyluo@mit.edu}

\renewcommand{\leftmark}{T. DILLON, J. KOIZUMI, AND S. LUO}
\renewcommand{\rightmark}{\MakeUppercase{At most 10 cylinders mutually touch: a Ramsey-theoretic approach}}

\date{\today}
\thanks{}
\subjclass{52C17, 52A40, 05D10}

\begin{abstract}
Littlewood asked for the maximum number $N$ of congruent infinite cylinders that can be arranged in $\mathbb{R}^3$ so that every pair touches.
We improve upon the proof of the second author that $N \leq 18$ to show that $N \leq 10$.
Together with the lower bound established by Boz\'oki, Lee, and R\'onyai, this shows that $N \in \{7,8,9,10\}$.
Our method is based on linear algebra and Ramsey theory, and makes partial use of computer verification.
We also provide a completely computer-free proof that $N \leq 12$.

\end{abstract}

\null
\vspace{-\baselineskip}

\maketitle

\enlargethispage*{20pt}
\thispagestyle{empty}

\section{Introduction}

The following problem was originally posed by Littlewood \cite{Littlewood}: What is the maximum number $N$ of congruent infinite cylinders that can be arranged in $\mathbb{R}^3$ so that every pair of them touches?
Equivalently, this asks for the largest possible number of lines in $\mathbb{R}^3$ such that the distance between any two distinct lines is exactly $1$. We call $N$ the \emph{champagne glass number} because it is the maximum number of (idealized mathematical) champagne glasses that can simultaneously be joined in a toast.

Boz\'oki, Lee, and R\'onyai used computer-assisted techniques to prove that $N \geq 7$ \cite{BLR}. On the other hand, A.\@ Bezdek proved that $N \leq 24$ by geometric arguments \cite{Bezdek}. More recently, the second author improved this to $N \leq 18$ by introducing a new method that combines linear algebra with Ramsey theory \cite{Koizumi}.  

Our paper develops this method and establishes the sharper bound $N \leq 10$:

\begin{theorem}\label{main}
    There is no configuration of 11 lines in $\mathbb{R}^3$ in which the distance between every pair of lines is exactly 1.
\end{theorem}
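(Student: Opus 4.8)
The plan is to attach to an alleged configuration of $11$ lines a finite combinatorial invariant, use linear algebra to show this invariant avoids a small monochromatic pattern, and then invoke hypergraph Ramsey theory to get a contradiction.

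\textbf{Setup.} Write the $i$-th line as $\ell_i = a_i + \mathbb{R}v_i$ with $|v_i| = 1$. Since all pairwise distances equal $1$, no two of the lines meet; after disposing separately of the (easy) cases in which two of the lines are parallel, we may assume the $v_i$ are pairwise non-proportional, so the common perpendicular of $\ell_i$ and $\ell_j$ has direction $v_i \times v_j \neq 0$ and $\epsilon_{ij} := \sgn\big((a_i - a_j)\cdot(v_i \times v_j)\big) \in \{\pm 1\}$ is well defined. The sign $\epsilon_{ij}$ depends on the chosen orientations of $\ell_i$ and $\ell_j$ (reversing one of them flips it), but the triple products $\tau_{ijk} := \epsilon_{ij}\epsilon_{jk}\epsilon_{ki}$ do not, and they are symmetric in $i,j,k$; thus $\tau$ is a genuine $2$-colouring of the triples of $\{1,\dots,11\}$. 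In the language of signed graphs, a set of lines is \emph{balanced} when all of its triple-signs equal $+1$, equivalently when orientations can be chosen so that every $\epsilon_{ij} = +1$; and an orientation-reversing isometry of $\mathbb{R}^3$ flips every $\epsilon_{ij}$ simultaneously, hence exchanges the ``all-$+$'' and ``all-$-$'' colourings.

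\textbf{Linear-algebraic core.} The key step is to show that no four of the lines can be balanced, i.e.\ there is no configuration of four pairwise-distance-$1$ lines admitting orientations with all six signs $\epsilon_{ij} = +1$. This is where I would strengthen the linear-algebra input of \cite{Koizumi}: passing to Plücker coordinates $L_i = (v_i,\, a_i \times v_i) \in \mathbb{R}^6$, one has $B(L_i,L_i) = 0$ and the touching condition becomes $B(L_i,L_j) = \epsilon_{ij}|v_i\times v_j|$ for the split signature-$(3,3)$ form $B(\,(v,w),(v',w')\,) = v\cdot w' + v'\cdot w$, so in the balanced case $B(L_i,L_j) = |v_i\times v_j| > 0$ for all $i\neq j$. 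Combining these positivity constraints with the identity $|v_i\times v_j|^2 = 1 - (v_i\cdot v_j)^2$ and the bound $\rank B \le 6$ together with its signature should already be contradictory at four lines. By the reflection symmetry noted above it follows that no four of the lines are ``all-unbalanced'' either, so the colouring $\tau$ contains no monochromatic $K_4^{(3)}$ of either colour.

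\textbf{From 12 down to 10.} Since the $3$-uniform Ramsey number $R^{(3)}(4,4) = 13$, every $2$-colouring of the triples of a $13$-set contains a monochromatic $K_4^{(3)}$; this already yields the computer-free bound $N \le 12$. To rule out $n = 11$ — the gap that forces the use of a computer — the plan is to enrich the list of forbidden colour-patterns: by analysing with computer algebra the polynomial systems attached to small configurations, one shows that certain \emph{non-monochromatic} patterns on (say) four or five lines are also geometrically unrealisable, and then one verifies (again with computer assistance) that no $2$-colouring of the triples of an $11$-set avoids all of the resulting forbidden patterns. I expect this last stage to be the main obstacle: the linear algebra alone stops at $12$, so closing the gap to $10$ requires both new impossibility results for explicit small sub-configurations and enough control over the combinatorics of triple-colourings of $K_{11}^{(3)}$, which is precisely why the proof is only partially computer-free.
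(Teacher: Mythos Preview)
Your linear-algebraic core is wrong at the crucial point. The Pl\"ucker/Gram argument you sketch does \emph{not} exclude four balanced lines: with $B(L_i,L_i)=0$ and $B(L_i,L_j)>0$ one gets (this is exactly Koizumi's lemma) that the $n\times n$ Gram matrix has signature $(1,n-1)$, while the ambient form has signature $(3,3)$; at $n=4$ the count is $(1,3)$, which fits inside $(3,3)$ with room to spare. The contradiction first appears at $n=5$, i.e.\ $K_5$ is not realizable, and that is all the paper (and \cite{Koizumi}) proves here. There is no strengthening to $n=4$ hiding in the identity $|v_i\times v_j|^2=1-(v_i\cdot v_j)^2$; indeed $K_4$ is realizable. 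Consequently your ``computer-free $N\le 12$ via $R^{(3)}(4,4)=13$'' collapses: with only $K_5$ forbidden you are back to the edge-Ramsey bound $R(4,4)=18$, which is Koizumi's $N\le 18$.

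What the paper actually does is different in both halves. On the linear-algebra side it keeps ``$K_5$ not realizable'' and adds four further non-realizable \emph{edge} patterns ($K_{3,2}$, $K_7\setminus C_5$, $K_7\setminus H_6$, $K_8\setminus H_7$), each obtained by a signature argument on a second auxiliary matrix $U=S-T$ (plus switching). On the combinatorial side it works with the ordinary edge $2$-colouring by $\varepsilon_{ij}$, not the triple colouring $\tau$; the ``free vertex'' trick of forcing all edges at one vertex red is how the $K_4$ in the Ramsey statement becomes a $K_5$ in the realizability statement. The Ramsey input (Theorem~\ref{ramsey}) is that every edge $2$-colouring of $K_{10}$ contains an induced monochromatic copy of one of five specific small graphs, and this is what is checked by computer. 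The human-readable $N\le 12$ in the appendix likewise uses the full list of five forbidden patterns, not a single ``no balanced $K_4$'' obstruction.
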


Our argument begins with the same approach as in \cite{Koizumi}. For two directed lines $L,L'$ in $\mathbb{R}^3$ that are not coplanar, we define their \emph{chirality} $\varepsilon(L,L') \in \{\pm 1\}$ as illustrated below.

\begin{center}
\begin{tikzpicture}[thick, scale=0.8]
  \draw (-1,-1) -- (1,1);
  \draw(1,-1) -- (0.2, -0.2);
  \draw (-0.2, 0.2) -- (-1, 1);
  \draw (-1, 0.8) -- (-1,1) -- (-0.8, 1);
  \draw (1, 0.8) -- (1,1) -- (0.8, 1);
  \node at (-1.2,-1.2) {$L$};
  \node at (1.2,-1.2) {$L'$};
  \node at (0,-2) {$\varepsilon(L,L')=+1$};
  \begin{scope}[shift={(5,0)}]
  \draw (1,-1) -- (-1,1);
  \draw(-1,-1) -- (-0.2, -0.2);
  \draw (0.2, 0.2) -- (1, 1);
  \draw (-1, 0.8) -- (-1,1) -- (-0.8, 1);
  \draw (1, 0.8) -- (1,1) -- (0.8, 1);
  \node at (0,1.2) {};
  \node at (-1.2,-1.2) {$L$};
  \node at (1.2,-1.2) {$L'$};
  \node at (0,-2) {$\varepsilon(L,L')=-1$};
  \end{scope}
\end{tikzpicture}
\end{center}
Note that $\varepsilon(L,L') = \varepsilon(L',L)$.
If $\overline{L}$ is the line $L$ with its direction reversed, then $\varepsilon(\overline{L},L') = -\varepsilon(L,L')$.

We call a finite simple graph $G$ \emph{realizable} if there is a configuration of directed lines $\{L_v\}_{v \in V(G)}$ in $\mathbb{R}^3$ (called a \emph{realization} of $G$) such that  
\begin{itemize}
    \item no two lines are parallel,  
    \item the distance between any two lines is exactly $1$, and  
    \item $\varepsilon(L_v,L_w) = +1 \iff vw \in E(G)$.  
\end{itemize}
If $G$ is realizable, then its complement graph $\overline{G}$ is also realizable (for example, by taking the reflection of $\{L_v\}_{v \in V(G)}$ across a hyperplane).
The second author showed by a linear-algebraic argument that $K_5$ is not realizable \cite[Theorem 2]{Koizumi}. This, combined with a short Ramsey-theory argument, shows that $N \leq 18$.  

In this paper, we identify additional non-realizable graphs, which allows us to further improve the upper bound. In the following theorem, $C_n$ denotes the $n$-cycle, and the two graphs $H_6$ and $H_7$ are pictured in \Cref{fig:H6H7}.  

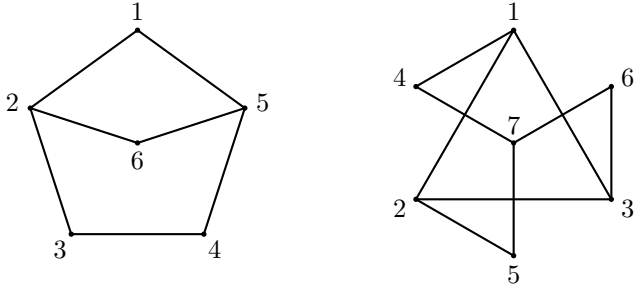
\begin{figure}[h]
    \centering
    \begin{tikzpicture}[scale=0.5]
      \foreach \i [evaluate=\i as \ang using {(\i-1)*360/5 + 90}] in {1,2,3,4,5}{
        \coordinate (P\i) at (\ang:3);
        \node at (\ang:3.5) {\i};
        \fill (P\i) circle (2pt);
      }
      \coordinate (P6) at (0,0);
      \node at (0,0) [below] {6};
      \fill (P6) circle (2pt);
      \foreach \i/\j in {1/2, 2/3, 3/4, 4/5, 5/1, 2/6, 5/6}{
        \draw[thick] (P\i) -- (P\j);
      }
      \begin{scope}[xshift=10cm]
      \foreach \i [evaluate=\i as \ang using {\i*360/3 - 30}] in {1,2,3}{
        \coordinate (P\i) at (\ang:3);
        \node at (\ang:3.5) {\i};
        \fill (P\i) circle (2pt);
      }
      \coordinate (P4) at (150:3);
      \node at (150:3.5) {4};
      \fill (P4) circle (2pt);
      \coordinate (P5) at (270:3);
      \node at (270:3.5) {5};
      \fill (P5) circle (2pt);
      \coordinate (P6) at (30:3);
      \node at (30:3.5) {6};
      \fill (P6) circle (2pt);
      \coordinate (Q) at (0,0);
      \node at (0,0) [above] {7};
      \fill (Q) circle (2pt);
      
      \foreach \i/\j in {1/2, 2/3, 3/1, 1/4, 2/5, 3/6}{
        \draw[thick] (P\i) -- (P\j);
      }
      \foreach \i in {4,5,6}{
        \draw[thick] (P\i) -- (Q);
      }
      \end{scope}
    \end{tikzpicture}
    \caption{$H_6$ (left) and $H_7$ (right)}
    \label{fig:H6H7}
\end{figure}

\begin{theorem}\label{forbidden_subgraphs}
    The graphs $K_5$, $K_{3,2}$, $K_7\setminus C_5$, $K_7\setminus H_6$, and $K_8\setminus H_7$ are not realizable.
\end{theorem}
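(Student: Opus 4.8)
The plan is to recast realizability as a semialgebraic feasibility problem, settle $K_5$ and $K_{3,2}$ by elementary arguments, and rule out the three larger graphs by combining rank constraints with a structured elimination whose hardest cases are completed by computer. For the reformulation: given a realization $\{L_v\}_{v\in V(G)}$, encode each line by a unit direction vector $u_v\in\mathbb{R}^3$ together with its moment $m_v=p_v\times u_v$ about the origin, so that $m_v\cdot u_v=0$; expanding the scalar triple product that computes the common perpendicular turns the distance-$1$ condition into
\[
m_v\cdot u_w+m_w\cdot u_v=\varepsilon(L_v,L_w)\sqrt{1-(u_v\cdot u_w)^2}\qquad(v\neq w),
\]
the sign on the right being $+1$ exactly on the edges of $G$. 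Two structural facts then carry the argument. The Gram matrix $C=[\,u_v\cdot u_w\,]$ is positive semidefinite of rank at most $3$, with unit diagonal and off-diagonal entries strictly between $-1$ and $1$. And, writing $U$ and $W$ for the $3\times|V(G)|$ matrices whose columns are the $u_v$ and the $m_v$, the symmetric matrix $M=[\,\varepsilon(L_v,L_w)\sqrt{1-(u_v\cdot u_w)^2}\,]$ (with zero diagonal) equals $U^{\mathsf{T}}W+W^{\mathsf{T}}U$, hence has rank at most $6$ and at most $3$ positive and at most $3$ negative eigenvalues; in particular $M$ is singular whenever $|V(G)|\ge 7$.

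The graph $K_5$ is not realizable by \cite[Theorem~2]{Koizumi}. For $K_{3,2}$ with parts $A,B$ where $|A|=3$ and $|B|=2$: in a hypothetical realization, reversing the directions of the two lines indexed by $B$ negates $\varepsilon(L_v,L_w)$ for $v\in B$, $w\notin B$ and leaves every other chirality unchanged (using $\varepsilon(\overline{L},L')=-\varepsilon(L,L')$), producing a realization in which all chiralities equal $-1$ — that is, a realization of $\overline{K_5}$. Since $\overline{K_5}$ is realizable if and only if $K_5$ is, $K_{3,2}$ is not realizable. (More conceptually: the class of realizable graphs is closed under complementation and under Seidel switching, and $K_{3,2}$ is switching-equivalent to $\overline{K_5}$.)

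For $G\in\{K_7\setminus C_5,\ K_7\setminus H_6,\ K_8\setminus H_7\}$, assume a realization exists, so that every $\varepsilon(L_v,L_w)$ is prescribed by $G$. Each of $C_5$, $H_6$, $H_7$ is chosen so that the vertices lying outside the deleted subgraph are universal in $G$ while the rest induce a rigid chirality pattern (namely $\overline{C_5}$, $\overline{H_6}$, or $\overline{H_7}$); this rigidity lets us fix a frame and express $C$ through a small number of parameters $c_{vw}=u_v\cdot u_w$. One then imposes that the $4\times4$ minors of $C$ vanish, that $M$ is singular (and of rank at most $6$ when $|V(G)|=8$), and that $U^{\mathsf{T}}W+W^{\mathsf{T}}U=M$ is solvable for $W$ with $m_v\cdot u_v=0$ for all $v$. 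After adjoining auxiliary variables for the square roots, this becomes a finite system of polynomial equations together with strict sign conditions — each $\sqrt{1-c_{vw}^2}>0$, with the $\pm$ pattern of $M$ fixed by $G$ — and the task is to show it has no solution. For $K_7\setminus C_5$, and for as much of the remainder as possible, one should extract an explicit polynomial identity forcing a contradiction; this is what underpins the computer-free bound $N\le 12$. The cases whose elimination ideals are too large to treat by hand are certified infeasible by an exact-arithmetic computation.

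The main obstacle is precisely this last step. The unconstrained feasibility problem carries far too many unknowns, so everything depends on using the universal vertices and the rank bounds to pare it down to a form on which a hand identity, or a terminating symbolic computation, becomes available — all while faithfully tracking the strict inequalities and the sign pattern, since it is those, not the bare polynomial relations, that separate these graphs from realizable ones. Producing a human-checkable contradiction for $K_7\setminus C_5$ and trustworthy exact certificates for $K_7\setminus H_6$ and $K_8\setminus H_7$ is where the real difficulty lies.
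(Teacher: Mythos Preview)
Your setup and the two small cases are fine and match the paper: the Pl\"ucker/moment encoding, the observation that $M=U^{\mathsf T}W+W^{\mathsf T}U$ has at most three positive and three negative eigenvalues, the citation to \cite{Koizumi} for $K_5$, and the switching argument reducing $K_{3,2}$ to $\overline{K_5}$ are exactly how the paper proceeds.

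The gap is in the three larger graphs. You propose to write down the full semialgebraic system in the variables $c_{vw}=u_v\cdot u_w$ (plus auxiliary square roots and the moment equations), impose rank and singularity constraints, and then eliminate --- by hand for $K_7\setminus C_5$ and by exact computer algebra for $K_7\setminus H_6$ and $K_8\setminus H_7$. This misses the structural idea that makes all three cases short and entirely computer-free. The paper uses a \emph{second} signature constraint you do not invoke: the matrix $S=(|a_{vw}|)$ of absolute values has signature $(1,n-1)$ (this is \cite[Lemma~3]{Koizumi}, recorded as property~(4) in Lemma~\ref{lem:realization}). Writing $U=S-T$, the matrix $U$ is supported precisely on the \emph{deleted} edges --- i.e.\ on $C_5$, $H_6$, or $H_7$ --- with strictly positive entries there. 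For such sparse sign patterns the signature of $U$ is constant on the connected parameter region and can be read off from the adjacency matrix (Lemmas~\ref{lem:cyclic} and~\ref{lem:h7}). One then intersects a codimension-$1$ negative-definite subspace for $S$ with a positive-semidefinite subspace for $U$ of the right dimension, and $T=S-U$ is forced to be negative definite on a space of dimension at least $4$, contradicting the ``at most $3$ negative eigenvalues'' bound you already have. No elimination, no Gr\"obner bases, no computer.

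Two consequences of missing this: first, $K_7\setminus H_6$ needs no separate analysis at all, since switching at two vertices turns it into $K_7\setminus C_5$ (see Figure~\ref{fig:K7H6_switching}); second, your plan to ``certify infeasibility by exact-arithmetic computation'' is both unnecessary here and not obviously workable --- the system you describe still has many free parameters, and you give no argument that the elimination terminates or that the resulting ideal witnesses infeasibility under the strict inequalities. The paper's computer use is confined to the Ramsey statement (Theorem~\ref{ramsey}); Theorem~\ref{forbidden_subgraphs} is proved entirely by the signature arithmetic above.
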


We will also use the following Ramsey-theoretic result on closely related graphs.

\begin{theorem}\label{ramsey}
    In every red-blue coloring of the edges of $K_{10}$, either the red or blue subgraph contains an induced subgraph isomorphic to one of $K_4$, $K_{3,2}$, $K_6\setminus C_5$, $K_6\setminus H_6$, or $K_7\setminus H_7$.
\end{theorem}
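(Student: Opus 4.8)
The plan is to prove the contrapositive. Suppose $G$ is a red--blue colouring of $E(K_{10})$ in which neither colour class contains an induced copy of any member of $\mathcal F:=\{K_4,\,K_{3,2},\,K_6\setminus C_5,\,K_6\setminus H_6,\,K_7\setminus H_7\}$; I want to derive a contradiction. Viewing $G$ as the abstract graph formed by its red edges, the hypothesis is exactly that $G$ has no induced subgraph isomorphic to a member of $\mathcal F\cup\overline{\mathcal F}$, a ten-element, complementation-closed family of graphs on $4$ to $7$ vertices; here $\overline{K_6\setminus C_5}\cong C_5\sqcup K_1$, $\overline{K_6\setminus H_6}\cong H_6$, and $\overline{K_7\setminus H_7}\cong H_7$. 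Because the family is closed under complementation, I may interchange the two colours whenever convenient.

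The first step is to extract local structure from the absence of a monochromatic $K_4$. For each vertex $v$ the red subgraph on the red-neighbourhood $N_R(v)$ is red-triangle-free, and the blue subgraph on $N_B(v)$ is blue-triangle-free. Since $|N_R(v)|+|N_B(v)|=9$, one neighbourhood has at least five vertices; after a colour swap, assume $|N_R(v)|\ge 5$. The key point is that if the red subgraph on some five vertices $S\subseteq N_R(v)$ were an induced $C_5$, then $\{v\}\cup S$ would induce in red the graph obtained from $C_5$ by adding a vertex joined to all five of its vertices --- that is, a copy of $K_6\setminus C_5$ --- which is forbidden. Hence $G[N_R(v)]$ is red-triangle-free with no induced red $C_5$; since $C_5$ is the unique triangle-free graph on five vertices with independence number $2$, and since $R(3,3)=6$, it follows that $N_R(v)$ contains three pairwise red-nonadjacent vertices, i.e.\ a blue triangle each of whose vertices is joined to $v$ by a red edge. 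Running this at any vertex, I reduce to the configuration in which some vertex $v_0$ is joined by edges of one colour, say red, to all three vertices of a triangle $T_0=\{a,b,c\}$ of the opposite colour.

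From this point the plan is a finite case analysis on the colours of the edges among the remaining six vertices $W$ and between $W$ and $T_0$, driven by the remaining forbidden configurations: since neither colour contains an induced $K_4$, each vertex of $W$ is blue-adjacent to at most two of $a,b,c$; since neither colour contains an induced $K_{3,2}$, every blue edge inside $W\cup\{v_0\}$ must send a blue edge to $T_0$ (otherwise a blue $K_3\sqcup K_2$, equivalently a red $K_{3,2}$, appears), and symmetrically for red; and the denser configurations that survive these restrictions are eliminated by the $K_6\setminus H_6$-, $H_6$-, $K_7\setminus H_7$-, and $H_7$-conditions. Organising the casework by how the six vertices of $W$ distribute among the seven possible patterns of blue-adjacency to $T_0$, I would check that no completion avoids every member of $\mathcal F\cup\overline{\mathcal F}$.

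I expect this last case analysis to be the principal obstacle: each branch is elementary, but there are many, and exhibiting a forbidden induced subgraph in each is error-prone enough that I would verify it by computer. A self-contained alternative that skips the structural reduction: since $G$ and $\overline G$ are both $K_4$-free, $G$ is one of the finitely many Ramsey $(4,4)$-graphs on ten vertices (which can be enumerated), and it then suffices to test each of these against the eight remaining forbidden induced subgraphs --- again a finite, computer-checkable task.
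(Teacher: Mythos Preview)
Your plan is sound and, like the paper, ultimately defers the $K_{10}$ case to computer verification; the two computations are just organised differently. The paper's proof (Appendix~A) is an incremental search: starting from the unique $2$-colouring of $K_1$, it maintains the list $\mathcal F_k$ of $2$-colourings of $K_k$ with no forbidden monochromatic induced subgraph, extends each member by one vertex in all $2^k$ ways, prunes, removes isomorphs, and iterates until $\mathcal F_{10}=\varnothing$. Your two proposals instead either (i) first extract the configuration ``a vertex red-joined to a blue triangle'' via the correct observation that $K_6\setminus C_5$ is the $5$-wheel, and then branch on the six remaining vertices, or (ii) filter the known catalogue of Ramsey $(4,4)$-graphs on ten vertices against the remaining eight forbidden induced subgraphs. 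All three are legitimate finite checks. The paper's approach is self-contained (no external catalogue) and prunes early with the full family $\mathcal F\cup\overline{\mathcal F}$, keeping intermediate lists small; your route (ii) would likely be faster if the catalogue is in hand. Your structural reduction (i) is closer in spirit to the paper's human-readable argument for $K_{12}$ in Appendix~B, which likewise cases on neighbourhood structure, though the paper does not attempt to push that style of argument down to $K_{10}$.
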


\begin{figure}[h]
    \centering
    \begin{tikzpicture}[scale=0.5]
    \begin{scope}
      \foreach \i [evaluate=\i as \ang using {(\i-1)*720/5 + 90}] in {1,...,5}{
        \coordinate (P\i) at (\ang:3);
        \node at (\ang:3.5) {\i};
        \fill (P\i) circle (2pt);
      }
      \coordinate (P6) at (0,0);
      \node at (0,-0.7) {$6$} ;
      \fill (P6) circle (2pt);
      \foreach \i/\j in {1/3, 1/4, 2/4, 2/5, 3/5, 1/6, 2/6, 3/6, 4/6, 5/6}{
        \draw[thick] (P\i) -- (P\j);
      }
    \end{scope}
    \begin{scope}[xshift=10cm]
      \foreach \i [evaluate=\i as \ang using {(\i-1)*720/5 + 90}] in {1,...,5}{
        \coordinate (P\i) at (\ang:3);
        \node at (\ang:3.5) {\i};
        \fill (P\i) circle (2pt);
      }
      \coordinate (P6) at (0,0);
      \node at (0,-0.7) {$6$} ;
      \fill (P6) circle (2pt);
      \foreach \i/\j in {1/3, 1/4, 2/4, 2/5, 3/5, 1/6, 3/6, 4/6}{
        \draw[thick] (P\i) -- (P\j);
      }
    \end{scope}
    \begin{scope}[xshift=20cm]
      \coordinate (P1) at (90:3);
      \node at (90:3.5) {1};
      \fill (P1) circle (2pt);
      \coordinate (P2) at (210:3);
      \node at (210:3.5) {2};
      \fill (P2) circle (2pt);
      \coordinate (P3) at (330:3);
      \node at (330:3.5) {3};
      \fill (P3) circle (2pt);
      \coordinate (P4) at (270:3);
      \node at (270:3.5) {4};
      \fill (P4) circle (2pt);
      \coordinate (P5) at (30:3);
      \node at (30:3.5) {5};
      \fill (P5) circle (2pt);
      \coordinate (P6) at (150:3);
      \node at (150:3.5) {6};
      \fill (P6) circle (2pt);
      \coordinate (P7) at (0,0);
      \node at (0,-0.7) {$7$} ;
      \fill (P7) circle (2pt);
      \foreach \i/\j in {1/5, 1/6, 1/7, 2/4, 2/6, 2/7, 3/4, 3/5, 3/7, 4/5, 5/6, 6/4}{
        \draw[thick] (P\i) -- (P\j);
      }
    \end{scope}
    \end{tikzpicture}
    \label{fig:forbidden}
    \caption{$K_6\setminus C_5$ (left), $K_6\setminus H_6$ (middle), $K_7\setminus H_7$ (right)}
\end{figure}
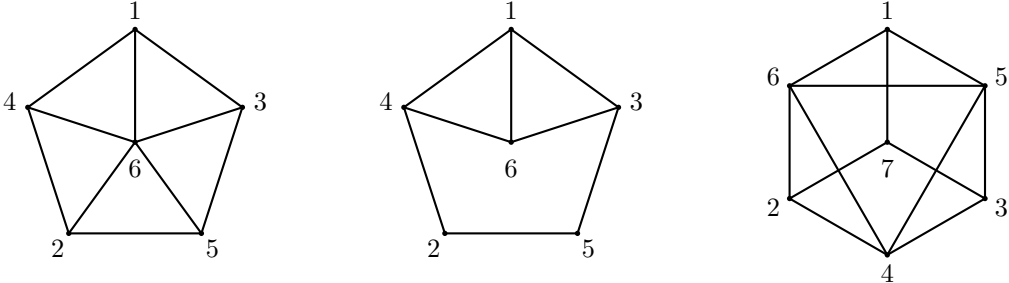

\Cref{main} now follows from \Cref{forbidden_subgraphs} and \Cref{ramsey}.

\begin{proof}[Proof of \Cref{main}]
    Suppose that $L_1,\dots,L_N$ are directed lines in $\mathbb{R}^3$ such that the distance between any two distinct lines is exactly $1$.
    If any two of these lines are parallel, say $L_i$ and $L_j$, then either there is a third line parallel to $L_i$ and $L_j$, in which case there can be no other line equidistant to all 3; or all remaining lines lie in one of the two planes parallel to and at distance 1 from the plane spanned by $L_i$ and $L_j$, in which case there can be at most 4 lines.
    
    Therefore suppose that no two lines are parallel, and consider the 2-coloring of the edges of $K_N$ defined by
    \[
     E(K_n)\to \{\text{red},\text{blue}\};\quad ij\mapsto \begin{cases}
        \text{red}&(\varepsilon(L_i,L_j)=+1)\\
        \text{blue}&(\varepsilon(L_i,L_j)=-1).
    \end{cases}
    \]
    By replacing $L_i$ by $\overline{L_i}$ if necessary, we may assume that all edges adjacent to the vertex $1$ are red.
    If $N\geq 11$, then \Cref{ramsey} shows that there exists an induced monochromatic subgraph isomorphic to one of $K_4$, $K_{3,2}$, $K_6\setminus C_5$, $K_6\setminus H_6$, or $K_7\setminus H_7$ on the vertex set $\{2,3,\dots,n\}$.
    By replacing $L_1$ by $\overline{L_1}$ if necessary, we obtain an induced monochromatic subgraph of $K_N$ isomorphic to one of $K_5$, $K_{3,2}$, $K_7\setminus C_5$, $K_7\setminus H_6$, or $K_8\setminus H_7$, which contradicts \Cref{forbidden_subgraphs}.
    Therefore $N\leq 10$.
\end{proof}

What's left is to verify \Cref{forbidden_subgraphs,ramsey}. We prove \Cref{forbidden_subgraphs} in \Cref{sec:forb-subgraph-proof}. We verify \Cref{ramsey} by computer search; \Cref{pseudocode} outlines our approach. We also provide a fully human-readable proof of \Cref{ramsey} for $K_{12}$ in \Cref{section:12}, which yields a complete proof of $N \leq 12$ that does not rely on computational verification.

\vspace{\baselineskip}
\section{Proof of \texorpdfstring{\Cref{forbidden_subgraphs}}{Theorem 2}}\label{sec:forb-subgraph-proof}


We begin with a slight reformulation of the work in \cite{Koizumi}.

\begin{lemma}\label{lem:realization}
    Let $G$ be a realizable graph with $|V(G)|\geq 2$.
    Then, there exists a real symmetric matrix $T=(a_{vw})_{v,w\in V(G)}$ with the following properties:
    \begin{enumerate}
        \item $a_{vw}=0$ if and only if $v=w$.
        \item $a_{vw}>0$ if and only if $vw\in E(G)$.
        \item $T$ has at most $3$ negative eigenvalues.
        \item The matrix $(|a_{vw}|)_{v,w\in V(G)}$ has signature $(1,n-1)$.
    \end{enumerate}
\end{lemma}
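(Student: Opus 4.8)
The plan is to produce $T$ directly from the Plücker coordinates of a realization of $G$; this is, in essence, the computation underlying \cite{Koizumi}. Fix a realization $\{L_v\}_{v\in V(G)}$ and write $n=|V(G)|\geq 2$. For each $v$ let $u_v\in\mathbb R^3$ be the unit vector pointing along $L_v$, choose an arbitrary point $p_v\in L_v$, and put $m_v:=p_v\times u_v$; then $|u_v|=1$, $u_v\cdot m_v=0$, and $m_v$ is independent of the choice of $p_v$ on $L_v$. Define
\[
a_{vw}:=(p_v-p_w)\cdot(u_v\times u_w)=u_v\cdot m_w+u_w\cdot m_v\qquad(v,w\in V(G)),
\]
which is well defined (independent of the choices of $p_v,p_w$, since adding multiples of $u_v,u_w$ kills the triple product), symmetric, and satisfies $a_{vv}=2u_v\cdot m_v=0$. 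I will verify that $T=(a_{vw})_{v,w}$ has all four properties.

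Properties (1) and (2) are almost immediate. In a realization no two lines are parallel and every pair has distance $1>0$, so any two distinct lines are skew; hence $u_v\times u_w\neq 0$, the classical formula $d(L_v,L_w)=|a_{vw}|/|u_v\times u_w|$ gives $|a_{vw}|=|u_v\times u_w|>0$, and the sign of the scalar triple product $(p_v-p_w)\cdot(u_v\times u_w)$ is precisely the chirality $\varepsilon(L_v,L_w)$ (this is what the picture defining $\varepsilon$ encodes; cf.\ \cite{Koizumi}). Therefore $a_{vw}=0\iff v=w$, and $a_{vw}>0\iff\varepsilon(L_v,L_w)=+1\iff vw\in E(G)$. (If one's orientation convention makes $\sgn(a_{vw})=-\varepsilon(L_v,L_w)$, use $-T$ instead; this does not affect properties (3) and (4).) For property (3), let $A\in\mathbb R^{n\times 6}$ have $v$-th row $(u_v,m_v)$ and let $J=\begin{pmatrix}0&I_3\\ I_3&0\end{pmatrix}$, so that $T=AJA^\top$. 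Diagonalizing $J=V\mathrm{diag}(I_3,-I_3)V^\top$ with $V$ orthogonal yields $T=B_+B_+^\top-B_-B_-^\top$ with $B_\pm\in\mathbb R^{n\times 3}$; on the orthogonal complement of the column space of $B_-$, which has dimension at least $n-3$, the quadratic form $x\mapsto x^\top Tx=\|B_+^\top x\|^2$ is nonnegative, so $T$ has at most $3$ negative eigenvalues.

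Property (4) is where the work lies. The identity to use is
\[
|a_{vw}|^2=|u_v\times u_w|^2=1-(u_v\cdot u_w)^2=\tfrac12\big\|u_vu_v^\top-u_wu_w^\top\big\|_F^2,
\]
so, setting $y_v:=\tfrac{1}{\sqrt2}\,u_vu_v^\top$ — a point in the Euclidean space of symmetric $3\times3$ matrices — the matrix $(|a_{vw}|)_{v,w}$ is exactly the unsquared Euclidean distance matrix $\big(\|y_v-y_w\|\big)_{v,w}$, and the points $y_v$ are pairwise distinct because no two of the lines are parallel. It thus remains to prove the general statement that \emph{the distance matrix $B=(\|y_i-y_j\|)_{i,j}$ of any $n\geq 2$ distinct points in a Euclidean space has signature exactly $(1,n-1)$}. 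For the ``at most one positive eigenvalue'' half I would use Schoenberg's representation $\|z\|=c\int_0^\infty(1-e^{-t\|z\|^2})\,t^{-3/2}\,dt$ (with $c>0$): for $\gamma\in\mathbb R^n$ with $\sum_i\gamma_i=0$,
\[
\gamma^\top B\gamma=\sum_{i,j}\gamma_i\gamma_j\|y_i-y_j\|=-c\int_0^\infty t^{-3/2}\Big(\sum_{i,j}\gamma_i\gamma_j\,e^{-t\|y_i-y_j\|^2}\Big)\,dt\leq 0,
\]
since the Gaussian kernel is positive semidefinite; as $\mathrm{tr}\,B=0$ and $B\neq0$, this leaves $B$ with exactly one positive eigenvalue. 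For the remaining $n-1$ eigenvalues to be strictly negative — that is, for $B$ to be nonsingular — one uses that the Gaussian kernel is \emph{strictly} positive definite on distinct points, so the integrand is strictly positive and $\gamma^\top B\gamma<0$ for every $\gamma\neq0$ in the hyperplane $\{\sum_i\gamma_i=0\}$.

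I expect this nondegeneracy assertion to be the only genuinely delicate point, and it is what forces us to work with unsquared distances: for the \emph{squared} distance matrix $(|a_{vw}|^2)_{v,w}=\mathbf 1\mathbf 1^\top-\big((u_v\cdot u_w)^2\big)_{v,w}$ the analogous conclusion fails once $n>6$, because then the rank-one matrices $u_vu_v^\top$ are linearly dependent and the matrix is degenerate on the hyperplane $\{\sum_i\gamma_i=0\}$. It is precisely the passage to unsquared distances, together with the strict positive-definiteness of the Gaussian kernel, that rescues the statement; this is the step I would write out most carefully, or cite from the standard theory of conditionally negative definite kernels.
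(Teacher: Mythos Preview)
Your argument is correct and, for properties (1)--(3), essentially identical to the paper's: you and the paper build $T$ from the Pl\"ucker data of a realization, identify $a_{vw}$ with the signed cross-product volume, and obtain (3) by recognizing $T$ as a Gram matrix for the signature-$(3,3)$ form $\begin{psmallmatrix}0&I_3\\I_3&0\end{psmallmatrix}$ on $\mathbb R^6$.

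The difference is in (4). The paper simply cites \cite[Lemma~3]{Koizumi} for the signature of $(|a_{vw}|)$, whereas you supply a self-contained proof: the identity $|a_{vw}|=|u_v\times u_w|=\tfrac{1}{\sqrt2}\,\|u_vu_v^\top-u_wu_w^\top\|_F$ exhibits $(|a_{vw}|)$ as a Euclidean distance matrix on distinct points, and then Schoenberg's integral representation together with the strict positive definiteness of the Gaussian kernel gives strict negative definiteness on the hyperplane $\{\sum_i\gamma_i=0\}$, forcing signature $(1,n-1)$. This is a clean and genuinely different route to (4); the trade-off is that it imports analytic machinery (the Schoenberg formula and the strict PD property of Gaussians) in place of whatever direct argument \cite{Koizumi} uses, but it has the virtue of being fully written out here and of explaining transparently why the \emph{unsquared} distances are needed. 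Your aside about the squared-distance matrix degenerating for $n>6$ is a nice diagnostic and worth keeping.
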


\begin{proof}
    Take a realization $\{L_v\}_{v\in V(G)}$ of $G$.
    Write $L_v=\mathbb{R}x_v+y_v$, where $x_v$ is the unit vector representing the direction of $L_v$.
    Then, we have
    $$
        1=d(L_v,L_w)=\varepsilon(L_v,L_w)\dfrac{\langle x_v\times x_w, y_v-y_w\rangle}{\|x_v\times x_w\|}.
    $$
    Define a matrix $T=(a_{vw})_{v,w\in V(G)}$ by
    $$
        a_{vw}=\varepsilon(L_v,L_w)\|x_v\times x_w\|=\langle x_v\times x_w, y_v-y_w\rangle.
    $$
    Properties (1) and (2) are immediate from the definition. Property (4) is Lemma 3 in \cite{Koizumi}.
    Let us prove (3).
    Define $q_v\in \mathbb{R}^3$ by $q_v=y_v\times x_v$.
    Then, we have
    $$
    a_{vw} = \langle q_v,x_w\rangle + \langle x_v,q_w\rangle=
    \begin{pmatrix}
        q_v^T&x_v^T
    \end{pmatrix}\begin{pmatrix}
        0&I_3\\I_3&0
    \end{pmatrix}\begin{pmatrix}
        q_w\\x_w
    \end{pmatrix}.
    $$
    Therefore, the matrix $T$ is the Gram matrix of the vectors $(q_v,x_v)\in \mathbb{R}^6$ with respect to the symmetric bilinear form of signature $(3,3)$.
    This shows that $T$ has at most $3$ negative eigenvalues.
\end{proof}

\begin{proposition}[{\cite[Theorem 2]{Koizumi}}]\label{prop:K5}
    $K_5$ is not realizable.
\end{proposition}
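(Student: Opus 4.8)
The plan is to read off the result immediately from \Cref{lem:realization}. Suppose, for contradiction, that $K_5$ is realizable. Since $|V(K_5)| = 5 \geq 2$, \Cref{lem:realization} produces a $5\times 5$ real symmetric matrix $T = (a_{vw})_{v,w \in V(K_5)}$ satisfying properties (1)--(4). Because $K_5$ is complete, property (2) forces every off-diagonal entry $a_{vw}$ to be strictly positive, while property (1) forces the diagonal entries to vanish; hence $|a_{vw}| = a_{vw}$ for all $v,w$, so the matrix $(|a_{vw}|)_{v,w}$ is literally equal to $T$. Property (4) then says $T$ has signature $(1,4)$, i.e.\ exactly $4$ negative eigenvalues, which directly contradicts property (3), that $T$ has at most $3$ negative eigenvalues. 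Therefore no such $T$ exists, and $K_5$ is not realizable.

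I do not expect any real obstacle in this argument: all the substance has been pushed into \Cref{lem:realization}, whose proof already juxtaposes the two competing eigenvalue estimates — the signature $(1,n-1)$ of the absolute-value matrix (Lemma 3 of \cite{Koizumi}) against the at-most-$3$ negative eigenvalues coming from realizing $T$ as a Gram matrix for a bilinear form of signature $(3,3)$. The only point that is special to $K_5$, and the only thing one must check, is that for a complete graph the matrix of absolute values coincides with $T$ itself, so that $(1,n-1) = (1,4)$ collides with ``at most $3$ negatives'' precisely when $n = 5$. (For $n \le 4$ there is no collision, and for non-complete graphs the equality $(|a_{vw}|) = T$ fails, which is exactly why the other forbidden graphs in \Cref{forbidden_subgraphs} demand a genuinely more involved analysis.)
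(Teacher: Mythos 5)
Your proof is correct and is essentially identical to the paper's: both observe that for the complete graph $K_5$ the matrix $(|a_{vw}|)$ coincides with $T$ itself, so property (4) (signature $(1,4)$, hence $4$ negative eigenvalues) directly contradicts property (3) (at most $3$ negative eigenvalues). Your write-up simply makes explicit the small verification that the paper leaves implicit.
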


\begin{proof}
    Assume, for the sake of contradiction, that $G=K_5$ is realizable.
    Take a matrix $T=(a_{vw})_{v,w\in V(G)}$ as in \Cref{lem:realization}.
    Then, $T=(|a_{vw}|)_{v,w\in V(G)}$ has at most $3$ negative eigenvalues but signature $(1,4)$, which is a contradiction.
\end{proof}

The switching $s_wG$ of a graph $G$ at a vertex $w \in V(G)$ refers to the graph obtained by deleting all edges incident to $w$ in $G$ and inserting edges between $w$ and every vertex that was not adjacent to $w$.
Note that if $G$ admits a realization $\{L_v\}_{v\in V(G)}$, then $s_wG$ admits a realization $\{L'_v\}_{v\in V(G)}$, where
$$
L'_v=\begin{cases}
    L_v&(v\neq w)\\
    \overline{L_v}&(v= w).
\end{cases}
$$

\begin{proposition}\label{prop:K32}
    $K_{3,2}$ is not realizable.
\end{proposition}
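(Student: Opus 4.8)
The plan is to reduce $K_{3,2}$ to $K_5$ using the two operations already shown to preserve realizability: passing to the complement graph and switching at a vertex. If $K_{3,2}$ were realizable, then so would be its complement $\overline{K_{3,2}}$, which is the disjoint union $K_3 \sqcup K_2$ of a triangle on a vertex set $\{1,2,3\}$ and a single edge on a vertex set $\{4,5\}$. Thus it would suffice to show that $K_3 \sqcup K_2$ is not realizable.

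Next I would apply two switchings. Switching $K_3 \sqcup K_2$ first at vertex $4$ and then at vertex $5$ has the net effect of toggling precisely the edges of the cut between $\{4,5\}$ and $\{1,2,3\}$ — the edge $45$ gets toggled twice and so is unchanged — while the edges inside $\{1,2,3\}$ and inside $\{4,5\}$ are never touched. In $K_3 \sqcup K_2$ that cut is empty, so after the two switchings we have inserted all six edges between $\{4,5\}$ and $\{1,2,3\}$ and retained the triangle on $\{1,2,3\}$ together with the edge $45$; the resulting graph is exactly $K_5$. Since switching at a vertex preserves realizability, realizability of $K_3 \sqcup K_2$ would force $K_5$ to be realizable, contradicting \Cref{prop:K5}. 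Equivalently, one could switch $K_{3,2}$ itself at $4$ and at $5$ to reach the empty graph $\overline{K_5}$ and then complement; the two routes coincide.

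I do not expect a genuine obstacle here: the whole content is the observation that $K_{3,2}$ lies in the same orbit as $K_5$ under the group generated by complementation and vertex switching, so that the non-realizability of $K_5$ propagates to it. The only point requiring a little care is bookkeeping the edge set through the two switchings — equivalently, checking that switching successively at $4$ and at $5$ amounts to toggling exactly the cut $\delta(\{4,5\})$ — which is indicated above. (As an alternative one could avoid switching entirely and argue with the matrix $T$ of \Cref{lem:realization}: for the bipartite sign pattern of $K_{3,2}$ the diagonal sign change $D=\mathrm{diag}(1,1,1,-1,-1)$ gives $DTD=-|T|$, so $T$ has signature $(4,1)$; but $T$ is the Gram matrix of five vectors for a bilinear form of signature $(3,3)$, hence has at most $3$ positive eigenvalues — a contradiction. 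I expect the switching argument to be the cleaner write-up and the better prototype for the remaining forbidden graphs.)
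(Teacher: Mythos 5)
Your switching argument is essentially the paper's own proof: the paper switches $K_{3,2}$ at vertices $4$ and $5$ to obtain $\overline{K_5}$ and then invokes the complementation remark, which is the same orbit computation you carry out in the opposite order (and which you correctly observe coincides with it). Your parenthetical matrix alternative is also sound, though it quietly uses that $T$ has at most $3$ \emph{positive} eigenvalues --- a fact that follows from the signature-$(3,3)$ Gram matrix argument in the proof of \Cref{lem:realization} but is not among the stated properties, so it would need to be made explicit if you took that route.
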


\begin{proof}
    Fix a labeling of the vertices of $K_{3,2}$ so that the two parts are $\{1,2,3\}$ and $\{4,5\}$.
    Then the graph $\overline{K_5}$, the edgeless graph with $5$ vertices, can be written as $s_4s_5K_{3,2}$.
    Therefore, if $K_{3,2}$ is realizable, then $\overline{K_5}$ must also be realizable, which contradicts \Cref{prop:K5}.
\end{proof}

We now elaborate on these ideas to prove that $K_7 \setminus C_5$, $K_7 \setminus H_6$, and $K_8 \setminus H_7$ are not realizable.

\begin{lemma}\label{lem:cyclic}
    Let $n\geq 3$ be an odd integer.
    Let $A=(a_{ij})_{i,j}$ be an $n\times n$ real symmetric matrix such that
    \begin{align}\label{eq:cyclic}
        \begin{cases}
            a_{ij}>0&(i-j\equiv \pm1\bmod n),\\
            a_{ij}=0&(\text{otherwise}).
        \end{cases}
    \end{align}
    Then, the signature of $A$ is given as follows:
    $$
        \begin{cases}
            \left(\frac{n+1}{2},\frac{n-1}{2}\right)&(n\equiv 1\bmod 4),\\
            \left(\frac{n-1}{2},\frac{n+1}{2}\right)&(n\equiv 3\bmod 4).
        \end{cases}
    $$
\end{lemma}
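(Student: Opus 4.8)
The plan is to reduce the statement to a single concrete matrix — the $0/1$ adjacency matrix $B$ of the cycle $C_n$ — and then read off its signature from its spectrum.

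\emph{Reduction by diagonal congruence.} For any invertible diagonal matrix $D=\mathrm{diag}(d_1,\dots,d_n)$, the matrix $DAD$ is congruent to $A$, hence has the same signature by Sylvester's law of inertia. I would choose the $d_i$ to be \emph{positive} reals satisfying $d_i d_{i+1}=a_{i,i+1}^{-1}$ for all $i$ (indices mod $n$), so that $DAD=B$. Taking $t_i=\log d_i$ turns this into the linear system $t_i+t_{i+1}=-\log a_{i,i+1}$, whose coefficient matrix is $I+P$ with $P$ the cyclic shift. Since $n$ is odd, $-1$ is not an eigenvalue of $P$ (the eigenvalues being the $n$-th roots of unity), so $\det(I+P)\neq 0$ and the system has a (unique) real solution; exponentiating yields the required positive $d_i$. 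This is the essential place the hypothesis ``$n$ odd'' is used — for even $n$ the system is generally inconsistent (then $C_n$ is bipartite), and the conclusion indeed fails in that case.

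\emph{Spectrum and sign count.} The circulant $B$ has eigenvalues $\lambda_k=2\cos(2\pi k/n)$ for $k=0,1,\dots,n-1$. Because $n$ is odd, $\lambda_k=\lambda_{n-k}$ with $n-k\neq k$ for $1\le k\le n-1$, so the spectrum consists of the simple eigenvalue $\lambda_0=2>0$ together with $\lambda_1,\dots,\lambda_{(n-1)/2}$, each of multiplicity $2$. None of these vanishes: $2\cos(2\pi k/n)=0$ would force $4k=n(2j+1)$ for some integer $j$, impossible as the left side is even and the right side odd. For $1\le k\le (n-1)/2$ we have $0<2\pi k/n<\pi$, so $\lambda_k>0\iff 4k<n$, and writing $n=4m+1$ or $n=4m+3$ one checks that this holds for exactly $m$ values of $k$ in either case. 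Hence $B$ has $1+2m$ positive and $n-1-2m$ negative eigenvalues, giving signature $(2m+1,2m)=\bigl(\tfrac{n+1}{2},\tfrac{n-1}{2}\bigr)$ when $n=4m+1$ and $(2m+1,2m+2)=\bigl(\tfrac{n-1}{2},\tfrac{n+1}{2}\bigr)$ when $n=4m+3$.

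\emph{Main obstacle.} The spectral count in the second paragraph is routine once $B$ is in hand; the only genuine point is the reduction in the first paragraph — verifying that the rescaling system is solvable over the \emph{positive} reals — and this is precisely where oddness of $n$ is indispensable. (One could instead perform the rescaling by hand: set $d_1>0$ arbitrarily and propagate $d_{i+1}=(a_{i,i+1}d_i)^{-1}$ around the cycle; because $n$ is odd, the single wrap-around consistency condition takes the form $d_1^2=c$ with $c>0$, so it is solvable with all $d_i>0$.)
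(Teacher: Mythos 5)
Your proof is correct, and it reaches the same endpoint as the paper (the spectrum of the adjacency matrix of $C_n$) by a genuinely different reduction. The paper's argument is soft: it observes that the set $\mathcal{S}_n$ of matrices satisfying \eqref{eq:cyclic} is connected and that every member is nonsingular because $\det A = 2\prod_i a_{i,i+1} > 0$ (that determinant identity is where they use that $n$ is odd), so the inertia is constant on $\mathcal{S}_n$ and may be read off from the $0/1$ adjacency matrix. You instead build an explicit congruence $DAD=B$ with $D$ a positive diagonal matrix by solving $d_i d_{i+1}=a_{i,i+1}^{-1}$ around the cycle; your observation that the wrap-around consistency condition becomes $d_1^2=c$ with $c>0$ exactly when $n$ is odd is right, as is the linear-algebraic version via the invertibility of $I+P$. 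Your spectral sign count is also correct, including the verification that no eigenvalue $2\cos(2\pi k/n)$ vanishes for odd $n$. The trade-off: your reduction is constructive and avoids both the topological step and the determinant expansion, but it relies on being able to ``gauge away'' the edge weights of the cycle by a vertex rescaling, which works here because $C_n$ has a single (odd) independent cycle; the paper's connectedness-plus-nonvanishing-determinant template is less elementary but transfers verbatim to the later Lemma~\ref{lem:h7} for $H_7$, where a diagonal rescaling would not normalize all edge weights simultaneously.
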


\begin{proof}
    Let $\mathcal{S}_n$ denote the space of all $n\times n$ symmetric matrices satisfying \eqref{eq:cyclic}.
    For any $A=(a_{ij})_{i,j}\in \mathcal{S}_n$, we have
    $$
        \det A=2\prod_{i=1}^n a_{i,i+1}>0,
    $$
    where $a_{n,n+1}=a_{n,1}$.
    In particular, any member of $\mathcal{S}_n$ is non-singular.
    Since $\mathcal{S}_n$ is connected, the signature is constant on $\mathcal{S}_n$.
    Therefore, it suffices to verify the claim for the adjacency matrix of the $n$-cycle $C_n$.
    In this case, the eigenvalues of $A$ are given by
    $2\cos(2\pi k/n)\ (k=0,1,\dots,n-1)$,
    which proves the claim.
\end{proof}

\begin{proposition}\label{prop:K7-C5}
    $K_7\setminus C_5$ is not realizable.
\end{proposition}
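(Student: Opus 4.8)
The plan is to assume that $K_7\setminus C_5$ is realizable, apply \Cref{lem:realization} to obtain a $7\times 7$ real symmetric matrix $T=(a_{vw})$ with properties (1)--(4), and then reach a contradiction by showing that $T$ must have at least $4$ negative eigenvalues. Label the vertices so that the deleted $5$-cycle is $1\,2\,3\,4\,5\,1$; thus the non-edges of $K_7\setminus C_5$ are exactly the pairs $\{i,i+1\bmod 5\}$ with $i\in\{1,\dots,5\}$, while vertices $6,7$ are adjacent to everything. The central device is the decomposition $T=M-2N$, where $M=(|a_{vw}|)$ and $N$ is the symmetric matrix whose $(v,w)$-entry is $|a_{vw}|$ when $vw$ is a non-edge (one of the five cycle edges) and $0$ otherwise; this is a genuine identity because $a_{vw}=-|a_{vw}|$ on non-edges.

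Next I would read off what each summand contributes. By \Cref{lem:realization}(4) the matrix $M$ has signature $(1,6)$; letting $p$ be its unique eigenvector with positive eigenvalue, the form $M$ is negative \emph{definite} on the $6$-dimensional subspace $p^\perp$. On the other hand $N$ has all of its $6$th and $7$th rows and columns equal to zero (vertices $6,7$ lie in no non-edge), and its restriction to $\{1,\dots,5\}$ is precisely a matrix of the type considered in \Cref{lem:cyclic} with $n=5$. Since $5\equiv 1\pmod 4$, that restriction has signature $(3,2)$, so $N$ as a $7\times 7$ matrix has three positive, two zero, and two negative eigenvalues---in particular five nonnegative eigenvalues.

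The heart of the argument is a sandwich on $p^\perp$. Cauchy interlacing applied to the codimension-one restriction $N|_{p^\perp}$ shows that at least four of its six eigenvalues are nonnegative (passing from $\mathbb R^7$ to $p^\perp$ can destroy at most one of the five nonnegative eigenvalues of $N$). Simultaneously, $-M|_{p^\perp}$ is positive definite, so Weyl's inequality gives that every eigenvalue of $-T|_{p^\perp}=2N|_{p^\perp}+(-M|_{p^\perp})$ is strictly larger than the corresponding eigenvalue of $2N|_{p^\perp}$; hence at least four eigenvalues of $-T|_{p^\perp}$ are strictly positive. Consequently $-T$ has at least four positive eigenvalues, i.e.\ $T$ has at least four negative eigenvalues, contradicting \Cref{lem:realization}(3).

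I expect the only delicate point to be getting this count exactly right: the bound "$\ge 4$" is precisely one more than the permitted "$\le 3$", and it works only because $N$ supplies two genuine zero eigenvalues (from the universal vertices $6,7$) in addition to its three positive ones, so that four nonnegative eigenvalues still survive the codimension-one restriction to $p^\perp$. This is also the reason the \emph{exact} signature $(1,6)$ in \Cref{lem:realization}(4)---rather than merely "at most one positive eigenvalue"---is indispensable: without strict negative-definiteness of $M$ on $p^\perp$, Weyl's inequality would only be non-strict and the zero eigenvalues of $N$ would not be forced into positive territory.
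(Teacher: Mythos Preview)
Your proof is correct and follows essentially the same route as the paper: your decomposition $T=M-2N$ is identical to the paper's $T=S-U$ (with $S=M$ and $U=2N$), and both arguments invoke \Cref{lem:cyclic} to obtain the signature $(3,2,2)$ for $U=2N$. The only cosmetic difference is in the final count---the paper intersects the $6$-dimensional subspace on which $S$ is negative definite with a $5$-dimensional subspace on which $U$ is positive semi-definite to get a $4$-dimensional subspace on which $T$ is negative definite, whereas you reach the same conclusion via Cauchy interlacing and Weyl's inequality on $p^\perp$.
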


\begin{proof}
    Assume, for the sake of contradiction, that $G:=K_7\setminus C_5$ is realizable.
    Take a matrix $T=(a_{vw})_{v,w\in V(G)}$ as in \Cref{lem:realization}, and define
    $$
        S=(|a_{vw}|)_{v,w\in V(G)},\quad U=S-T.
    $$
    The signature of $S$ is $(1,6)$ by condition (4) of \Cref{lem:realization}.
    In particular, there is a $6$-dimensional subspace $V\subset \mathbb{R}^7$ on which $S$ is negative definite.
    On the other hand, \Cref{lem:cyclic} shows that the signature of $U$ is $(n_+,n_0,n_-)=(3,2,2)$.
    In particular, there is a $5$-dimensional subspace $W\subset \mathbb{R}^7$ on which $U$ is positive semi-definite.
    Since we can write $T=S-U$, we see that $T$ is negative definite on $V\cap W$, whose dimension is at least $4$.
    This contradicts condition (3) of \Cref{lem:realization} which states that $T$ has at most $3$ negative eigenvalues.
\end{proof}

\begin{proposition}\label{prop:K7-H6}
    $K_7\setminus H_6$ is not realizable.
\end{proposition}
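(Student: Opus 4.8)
I would prove \Cref{prop:K7-H6} the same way \Cref{prop:K32} was proved: exhibit a switching that identifies $K_7\setminus H_6$ with the graph $K_7\setminus C_5$ already handled in \Cref{prop:K7-C5}. Label the vertices of $K_7\setminus H_6$ so that the deleted edges are exactly the edges of $H_6$ on $\{1,\dots,6\}$, drawn as in \Cref{fig:H6H7}; then vertex $7$ is adjacent to all others. The clue for which switching to use is that $H_6$ has the pair of false twins $\{1,6\}$ — both $1$ and $6$ are adjacent in $H_6$ to $\{2,5\}$ and to nothing else, and $1\not\sim 6$ — so I would switch $K_7\setminus H_6$ at their common neighborhood, forming $s_2 s_5(K_7\setminus H_6)$.

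Verifying that $s_2 s_5(K_7\setminus H_6)\cong K_7\setminus C_5$ is a finite check. Switching at $\{2,5\}$ complements precisely the edges of $K_7\setminus H_6$ having exactly one endpoint in $\{2,5\}$; the pair $\{2,5\}$ itself lies inside the switching set, so it is unchanged and stays a non-edge. Of the ten pairs in the cut, the four edges $24,27,35,57$ become non-edges and the six non-edges $12,15,23,26,45,56$ become edges, while the non-edge $34$ (outside the cut) is untouched. Hence the non-edges of the switched graph are exactly $\{24,27,34,35,57\}$, which form a $5$-cycle on $\{2,3,4,5,7\}$ (cyclic order $2,4,3,5,7$) with $1$ and $6$ incident to none of them. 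So the switched graph is isomorphic to $K_7\setminus C_5$.

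To conclude, I would use that reversing the directions of $L_2$ and $L_5$ in a realization of $K_7\setminus H_6$ produces a realization of $s_2 s_5(K_7\setminus H_6)\cong K_7\setminus C_5$, contradicting \Cref{prop:K7-C5}. The one nontrivial step is spotting the right switching: running the argument of \Cref{prop:K7-C5} verbatim does not work, since for the matrix $U=S-T$ supported on $H_6$ the number of negative eigenvalues is not forced by the sign pattern (it can be $2$ or $3$ depending on the edge weights), so the dimension count fails to close — passing through $K_7\setminus C_5$ avoids this issue.
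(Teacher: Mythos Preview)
Your argument is correct and matches the paper's proof exactly: the paper also switches at vertices $2$ and $5$, observes that $s_2s_5(K_7\setminus H_6)\cong K_7\setminus C_5$, and then invokes \Cref{prop:K7-C5}. One small slip: the pair $\{2,5\}$ is an \emph{edge} of $K_7\setminus H_6$ (since $25\notin E(H_6)$), not a non-edge --- but it still remains an edge after the double switch, so your final list of non-edges $\{24,27,34,35,57\}$ is unaffected and the conclusion stands.
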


\begin{proof}
    Fix a labeling of the vertices of $H_6$ as in \Cref{fig:H6H7}.
    Then, the graph $s_2s_5(K_7\setminus H_6)$ is isomorphic to $K_7\setminus C_5$.
    Therefore, if $K_7\setminus H_6$ is realizable, then $K_7\setminus C_5$ must also be realizable, which contradicts \Cref{prop:K7-C5}.
\end{proof}

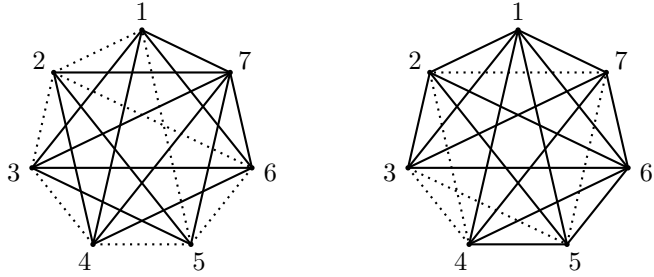
\begin{figure}[h]
    \centering
    \begin{tikzpicture}[scale=0.5]
      \foreach \i [evaluate=\i as \ang using {(\i-1)*360/7 + 90}] in {1,2,3,4,5,6,7}{
        \coordinate (P\i) at (\ang:3);
        \node at (\ang:3.5) {\i};
        \fill (P\i) circle (2pt);
      }
      \foreach \i/\j in {1/3,1/4,1/6,1/7,2/4,2/5,2/7,3/5,3/6,3/7,4/6,4/7,5/7,6/7}{
        \draw[thick] (P\i) -- (P\j);
      }
      \foreach \i/\j in {1/2,2/3,3/4,4/5,5/1,2/6,5/6}{
        \draw[thick, dotted] (P\i) -- (P\j);
      }
      \begin{scope}[xshift=10cm]
      \foreach \i [evaluate=\i as \ang using {(\i-1)*360/7 + 90}] in {1,2,3,4,5,6,7}{
        \coordinate (P\i) at (\ang:3);
        \node at (\ang:3.5) {\i};
        \fill (P\i) circle (2pt);
      }
      \foreach \i/\j in {1/2,1/3,1/4,1/5,1/6,1/7,2/3,2/5,2/6,3/6,3/7,4/5,4/6,4/7,5/6,6/7}{
        \draw[thick] (P\i) -- (P\j);
      }
      \foreach \i/\j in {2/4,4/3,3/5,5/7,7/2}{
        \draw[thick, dotted] (P\i) -- (P\j);
      }
      \end{scope}
    \end{tikzpicture}
    \caption{$K_7\setminus H_6$ (left) and $s_2s_5(K_7\setminus H_6)$ (right)}
    \label{fig:K7H6_switching}
\end{figure}

\begin{lemma}\label{lem:h7}
    Let $A=(a_{ij})_{i,j}$ be a $7\times 7$ real symmetric matrix such that
    \begin{align}\label{eq:h7}
        \begin{cases}
            a_{ij}>0&(ij\in E(H_7)),\\
            a_{ij}=0&(\text{otherwise}).
        \end{cases}
    \end{align}
    Then, $A$ has signature $(4,3)$.
\end{lemma}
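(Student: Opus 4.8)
The plan is to follow the strategy of \Cref{lem:cyclic}. Let $\mathcal{H}$ be the set of all real symmetric $7\times 7$ matrices satisfying \eqref{eq:h7}. It is a nonempty convex set---it is cut out from a linear subspace of symmetric matrices by the open conditions $a_{ij}>0$ for $ij\in E(H_7)$---hence connected, and it contains the adjacency matrix $A_0$ of $H_7$. If every matrix in $\mathcal{H}$ is nonsingular, then the inertia $(n_+,n_-)$ is locally constant, hence constant, on $\mathcal{H}$, so it suffices to compute the signature of $A_0$.

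\emph{Nonsingularity.} For $A=(a_{ij})\in\mathcal{H}$ I would evaluate $\det A=\sum_\sigma \sgn(\sigma)\prod_i a_{i\sigma(i)}$. Since $A$ has zero diagonal and support exactly $E(H_7)$, the only contributing permutations $\sigma$ are those every cycle of which is a cycle of the graph $H_7$ (a transposition being an edge). Here one uses that the degree-$2$ vertices $4,5,6$ have prescribed neighborhoods, so any graph cycle through $4$ uses the edges $14$ and $47$, any through $5$ uses $25$ and $57$, and any through $6$ uses $36$ and $67$; this quickly forces $H_7$ to have no $4$-cycle, exactly one triangle ($123$), exactly three $5$-cycles ($(1,4,7,5,2)$, $(1,4,7,6,3)$, $(2,5,7,6,3)$), and no Hamiltonian cycle (vertex $7$ would have degree $3$). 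The triangle cannot be completed to a cycle cover, since $\{4,5,6,7\}$ has no perfect matching, while each $5$-cycle is completed uniquely, by the edge on the two remaining vertices. Every contributing permutation is therefore a product of a $5$-cycle (in one of its two orientations) and a transposition, so has sign $-1$, and
\[
\det A=-2\bigl(a_{12}a_{14}a_{25}a_{47}a_{57}\,a_{36}^{2}+a_{13}a_{14}a_{36}a_{47}a_{67}\,a_{25}^{2}+a_{23}a_{25}a_{36}a_{57}a_{67}\,a_{14}^{2}\bigr)<0,
\]
as all factors are positive. (Alternatively, one can expand the determinant directly, exploiting that rows $4,5,6$ each have only two nonzero entries.)

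\emph{Signature of $A_0$.} I would use the order-$3$ automorphism $\rho$ of $H_7$ cyclically permuting $(1,2,3)$ and $(4,5,6)$ and fixing $7$. With $\omega=e^{2\pi i/3}$, the subspaces $V_0=\langle e_1+e_2+e_3,\ e_4+e_5+e_6,\ e_7\rangle$ and $V_\omega=\langle e_1+\omega e_2+\omega^2 e_3,\ e_4+\omega e_5+\omega^2 e_6\rangle$ of $\mathbb{C}^7$, together with $\overline{V_\omega}$, are $A_0$-invariant and span $\mathbb{C}^7$; since $A_0$ is real, $A_0|_{\overline{V_\omega}}$ has the same (real) spectrum as $A_0|_{V_\omega}$, so the spectrum of $A_0$ is that of $A_0|_{V_0}$ together with two copies of that of $A_0|_{V_\omega}$. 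In the given bases, $A_0$ acts on $V_0$ by $\left(\begin{smallmatrix}2&1&0\\1&0&1\\0&3&0\end{smallmatrix}\right)$, with characteristic polynomial $\lambda^3-2\lambda^2-4\lambda+6$, which has two positive roots (it changes sign on $(1,2)$ and on $(2,3)$) and hence one negative root; and it acts on $V_\omega$ by $\left(\begin{smallmatrix}-1&1\\1&0\end{smallmatrix}\right)$, with eigenvalues $\tfrac{-1\pm\sqrt5}{2}$, one positive and one negative. Thus $A_0$ has $2+2=4$ positive and $1+2=3$ negative eigenvalues, so its signature---and therefore that of every matrix in $\mathcal{H}$---is $(4,3)$.

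The step requiring the most care is the determinant computation: one must be sure the three cycle covers above are the only ones. This is the finite, slightly fiddly check described above, kept manageable by the fact that $4,5,6$ each have exactly the two neighbors shown in \Cref{fig:H6H7}.
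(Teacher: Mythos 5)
Your proof is correct and follows essentially the same strategy as the paper's (connectedness of the space of such matrices plus a determinant-sign argument, then evaluating the signature at the adjacency matrix of $H_7$); your determinant formula agrees with the paper's after factoring out $a_{14}a_{25}a_{36}$, and you additionally supply the combinatorial derivation of that formula and the symmetry-based eigenvalue computation that the paper leaves as ``a direct computation.'' One tiny point: $H_7$ does contain $6$-cycles (e.g.\ $1\,2\,5\,7\,6\,3$), so your cycle inventory is not exhaustive as stated; this is harmless, since a $6$-cycle leaves a single vertex that could only be a fixed point and hence contributes a zero diagonal entry, but it is worth saying explicitly.
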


\begin{proof}
    Let $\mathcal{S}$ denote the space of all $7\times 7$ symmetric matrices satisfying \eqref{eq:h7}.
    Let
    $$
    A=\begin{pmatrix}
        &a_{12}&a_{13}&a_{14}&&&\\
        a_{12}&&a_{23}&&a_{25}&&\\
        a_{13}&a_{23}&&&&a_{36}&\\
        a_{14}&&&&&&a_{47}\\
        &a_{25}&&&&&a_{57}\\
        &&a_{36}&&&&a_{67}\\
        &&&a_{47}&a_{57}&a_{67}&
    \end{pmatrix}\in \mathcal{S}.
    $$
    Then we have
    $$
    \det A
    = -2 a_{14}a_{25}a_{36} ( a_{12}a_{36}a_{47}a_{57}
    + a_{13}a_{25}a_{47}a_{67}
    + a_{14}a_{23}a_{57}a_{67})<0.
    $$
    In particular, any member of $\mathcal{S}$ is non-singular.
    Since $\mathcal{S}$ is connected, the signature is constant on $\mathcal{S}$.
    Therefore, it suffices to verify the claim for the adjacency matrix of $H_7$.
    In this case, a direct computation shows that $A$ has signature $(4,3)$.
\end{proof}

\begin{proposition}\label{prop:K8-H7}
    $K_8\setminus H_7$ is not realizable.
\end{proposition}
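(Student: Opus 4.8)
The plan is to mimic the eigenvalue-counting argument used to prove \Cref{prop:K7-C5}, substituting \Cref{lem:h7} for \Cref{lem:cyclic}. Suppose for contradiction that $G := K_8\setminus H_7$ is realizable. First I would fix a labeling $V(G)=\{1,\dots,8\}$ so that vertices $1,\dots,7$ carry the copy of $H_7$ — so that the non-edges of $G$ among $\{1,\dots,7\}$ are exactly the edges of $H_7$ — while vertex $8$ is adjacent to all the others. Then, as in the earlier proofs, take a matrix $T=(a_{vw})_{v,w\in V(G)}$ as in \Cref{lem:realization} and put $S=(|a_{vw}|)_{v,w\in V(G)}$ and $U=S-T$.

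The next step is to read off the signatures of $S$ and $U$. By condition (4) of \Cref{lem:realization}, $S$ has signature $(1,7)$, so there is a $7$-dimensional subspace $V\subseteq\mathbb{R}^8$ on which $S$ is negative definite. The entry $U_{vw}=|a_{vw}|-a_{vw}$ vanishes whenever $a_{vw}\geq 0$, i.e.\ whenever $v=w$ or $vw\in E(G)$, and is strictly positive exactly on the non-edges of $G$, which by our labeling are the edges of $H_7$ inside $\{1,\dots,7\}$. Thus $U$ has vanishing last row and column, and its top-left $7\times 7$ block satisfies \eqref{eq:h7}; by \Cref{lem:h7} that block has signature $(4,3)$, so $U$ has $4$ positive, $1$ zero, and $3$ negative eigenvalues. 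In particular there is a $5$-dimensional subspace $W\subseteq\mathbb{R}^8$ on which $U$ is positive semidefinite.

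Finally, since $T=S-U$, every nonzero $x\in V\cap W$ satisfies $x^{\top}Tx=x^{\top}Sx-x^{\top}Ux<0$, so $T$ is negative definite on $V\cap W$; and $\dim(V\cap W)\geq 7+5-8=4$, so $T$ has at least $4$ negative eigenvalues, contradicting condition (3) of \Cref{lem:realization}.

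I do not expect a genuine obstacle here, since this is essentially a transcription of the proof of \Cref{prop:K7-C5}. The only points needing care are (i) the signature bookkeeping for the singular matrix $U$ — in particular the extra zero eigenvalue contributed by the universal vertex $8$, which makes $U$ positive semidefinite on a $5$-dimensional rather than a $4$-dimensional space — and (ii) checking that the intersection dimension $7+5-8=4$ indeed exceeds the permitted number $3$ of negative eigenvalues of $T$. One could ask whether a switching argument in the style of \Cref{prop:K7-H6} would work instead, but $K_8\setminus H_7$ does not obviously switch into any graph already shown to be non-realizable, so the linear-algebraic route appears to be the natural one; the genuinely new input is \Cref{lem:h7}, whose proof is reduced (via connectedness of the relevant matrix space and a single determinant sign computation) to one explicit eigenvalue computation for the adjacency matrix of $H_7$.
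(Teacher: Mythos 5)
Your proof is correct and is essentially identical to the paper's own argument: the same decomposition $T=S-U$, the same use of condition (4) for the signature $(1,7)$ of $S$ and of \Cref{lem:h7} for the signature $(4,1,3)$ of $U$, and the same dimension count $7+5-8=4$ contradicting condition (3). The extra bookkeeping you supply (identifying the support of $U$ with $E(H_7)$ and the zero row from the universal vertex) is exactly the implicit content of the paper's proof.
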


\begin{proof}
    Assume, for the sake of contradiction, that $G:=K_8\setminus H_7$ is realizable.
    Take a matrix $T=(a_{vw})_{v,w\in V(G)}$ as in \Cref{lem:realization}, and define
    $$
        S=(|a_{vw}|)_{v,w\in V(G)},\quad U=S-T.
    $$
    The signature of $S$ is $(1,7)$ by condition (4) of \Cref{lem:realization}.
    In particular, there is a $7$-dimensional subspace $V\subset \mathbb{R}^8$ on which $S$ is negative definite.
    On the other hand, \Cref{lem:h7} shows that the signature of $U$ is $(n_+,n_0,n_-)=(4,1,3)$.
    In particular, there is a $5$-dimensional subspace $W\subset \mathbb{R}^8$ on which $U$ is positive semi-definite.
    Since we can write $T=S-U$, we see that $T$ is negative definite on $V\cap W$, whose dimension is at least $4$.
    This contradicts condition (3) of \Cref{lem:realization} which states that $T$ has at most $3$ negative eigenvalues.
\end{proof}

\section{Concluding remarks}

In this paper, we showed by a Ramsey-theoretic approach that at most 10 cylinders can lie in a mutually touching configuration. Combined with the lower bound of Boz\'oki, Lee, and R\'onyai \cite{BLR}, it follows that the maximum size of a mutually touching set of cylinders is $7,8,9,$ or $10$. We suspect that the right answer is 7, since that is the largest value for which the number of degrees of freedom in the line configurations is larger than the number of imposed constraints. 

The same problem is wide-open in higher dimensions. Let $\Ch(n)$ denote the \emph{champagne glass number in $\mathbb{R}^n$}, that is, the maximum size of a collection of lines in $\mathbb{R}^n$ in which the distance between each pair of lines is exactly 1. The following construction yields the lower bound $\Ch(n)\geq 2n-2$. Let $v_1,\dots,v_{n-1}\in  \mathbb{R}^{n-2}$ be the vertices of an $(n-2)$-simplex of side length $1$. For each $i=1,2,\dots,n-1$, choose two parallel lines $L_i^{(1)},L_i^{(2)}$ in $\mathbb{R}^2$ at distance $1$, arranged so that for distinct $i$ the chosen pairs are not parallel to each other. Then
$$
\{v_i\}\times L_i^{(j)}\quad (i=1,2,\dots,n-1,\ j=1,2)
$$
gives a set of $2n-2$ equidistant lines. In high dimensions, this is probably a weak bound; in fact, it is not even clear whether $\Ch(n)$ is finite for all $n$.

\section*{Acknowledgements}
TD's work was partially supported by a National Science Foundation Graduate Research Fellowship under Grant No.\@ 2141064. TD also thanks Jessica Liu, who, after hearing about geometric equidistance problems, piqued his interest by asking ``What about lines?''

SL's research is supported by NSF Award No.\@ 2303290. SL would also like to thank his friends Daniel and Lily, at whose wedding banquet he first came across this problem thanks to their tall (though not quite infinite) cylindrical champagne glasses. May their decades of happiness far exceed $\Ch(3)$.

\printbibliography
\addresseshere

\newpage


\appendix

\section{Algorithm for \texorpdfstring{\Cref{ramsey}}{Theorem 3}}\label{pseudocode}

In principle, one could prove \Cref{ramsey} by exhaustive search, finding an induced monochromatic copy of $K_4, K_{3,2}, K_6 \setminus C_5, K_6 \setminus H_6$, or $K_7 \setminus H_7$ in each 2-coloring of $K_{10}$. Rather than search through all colorings, we speed things up by iteratively building up ``feasible subgraphs''. 

We let $\mathcal F_k$ denote the set of 2-colorings of $K_k$ that do not contain a forbidden subgraph. To get $\mathcal F_{k+1}$ from $\mathcal F_k$, we use the following algorithm:
\begin{enumerate}
    \item For each coloring in $\mathcal F_k$, make $2^k$ new colorings by adding a $(k+1)$th vertex connected to all $k$ previous vertices, with all possible 2-colorings of the new edges. Let $\mathcal G_{k+1}$ denote the set of all these new colorings created from the colorings in $\mathcal F_k$.
    \item $\mathcal F_{k+1}$ consists of all those colorings in $\mathcal G_{k+1}$ that do not contain a forbidden subgraph.
\end{enumerate}

This process works because any coloring on $K_{k+1}$ with no forbidden subgraph can be constructed by adding a vertex to a coloring of $K_k$ vertices with no forbidden subgraph.

In our program, the two colors of edges are represented by edges and non-edges. The statement that every 2-coloring of $K_{10}$ contains a forbidden graph in one of the colors is equivalent to the statement that every graph on 10 vertices contains a forbidden graph or its complement as an induced subgraph. Our program proceeds via the following functions:

\vspace{0.5\baselineskip}
\begin{description}[font=\ttfamily]
    \item[IsForbidden($G$)] returns \texttt{True} if the graph $G$ contains a forbidden subgraph and \texttt{False} otherwise.
    \item[AllowedGraphs($L$)] takes a list of graphs $L$ and returns a new list that contains all graphs in $L$ that do not contain a forbidden subgraph.
    \item[AddVertex($G$)] takes a graph $G$ and returns a list of graphs constructed by adding a new vertex and all possible 2-colorings of the new edges.
    \item[AddVertexList($L$)] takes a list of graphs $L$ and returns the list obtained by applying \texttt{AddVertex} to each graph in $L$.
    \item[OneMoreVertex($L$)] takes a list of graph $L$, applies \texttt{AddVertexList} to $L$, and removes isomorphic duplicates.
\end{description}
\vspace{0.5\baselineskip}

The program runs by initializing with $L = \{K_1\}$ and applying \texttt{OneMoreVertex} 9 times. After approximately 3 hours (on a standard laptop), the program returns the empty list, signifying that every 2-coloring of $K_{10}$ contains a forbidden graph.

The Mathematica code we used to implement this procedure is available online \cite{code}.

\vspace{\baselineskip}
\section{Human-readable proof of \texorpdfstring{$\Ch(3)\le 12$}{Ch(3) <= 12}}\label{section:12}

We provide a fully human-readable proof of the following weaker version of \Cref{ramsey}.
This yields a complete proof of $\Ch(3) \leq 12$ that does not rely on computational verification.

\begin{theorem}
    In every 2-coloring of the edges of $K_{12}$, there exists a monochromatic induced subgraph isomorphic to one of $K_4$, $K_{3,2}$, $K_6\setminus C_5$, $K_6\setminus H_6$, or $K_7\setminus H_7$.
\end{theorem}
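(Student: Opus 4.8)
The plan is to reduce the problem to a finite but humanly-checkable case analysis by exploiting the structure forced on a coloring that avoids all five forbidden patterns. Throughout, view the coloring as a graph $G$ on $12$ vertices (one color class), so that we must find an induced copy, in $G$ or in $\overline{G}$, of one of $K_4$, $K_{3,2}$, $K_6\setminus C_5$, $K_6\setminus H_6$, or $K_7\setminus H_7$; note this list is closed under the operation ``take the complement within the ambient $K_n$'' only loosely, so we must genuinely track both colors. The first reduction: since $K_4$ and its complement $\overline{K_4}$ are both on the list (the latter as an induced subgraph we may forbid), a coloring avoiding the list has no monochromatic $K_4$ in either color, so by Ramsey's theorem applied with $R(4,4)=18>12$ this alone is not enough — instead I would use the stronger local consequence that \emph{every vertex} $v$ has red-neighborhood and blue-neighborhood each inducing a $K_4$-free and $\overline{K_4}$-free graph, i.e. a graph on (generically) $5$ or $6$ vertices with independence and clique number at most $3$; the only such graphs on $\geq 6$ vertices are very restricted (essentially blow-ups of $C_5$), and on exactly $5$ vertices the extremal $K_3,\overline{K_3}$-free graph is $C_5$ itself.

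The key step is then a degree/neighborhood analysis. Since $G$ has $12$ vertices, some vertex $v$ has red-degree $\geq 6$ or blue-degree $\geq 6$; without loss of generality say $v$ has at least $6$ red neighbors, inducing a set $S$ with $|S|\ge 6$. Inside $S$ we know there is no red $K_4$ and no red $\overline{K_4}$ (the latter because $\overline{K_4}$ together with $v$ would give... wait — more carefully, $\overline{K_4}$ is on the forbidden list as a subgraph of the \emph{other} color, so inside the red neighborhood $S$ the blue graph restricted to $S$ has no $\overline{K_4}$, meaning the red graph on $S$ has no independent set of size $4$, i.e. no blue $K_4$); combined, the red graph on $S$ has $\omega,\alpha\le 3$, forcing $|S|\le R(4,4)-1$ trivially but in fact forcing $S$ to have size at most $8$ and, when $|S|\in\{6,7,8\}$, a highly constrained structure (a Ramsey graph for $(4,4)$, of which there are few up to isomorphism — for $n=8$ only the Paley-type graph). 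I would enumerate these finitely many candidate structures on $S\cup\{v\}$ and, for each, show that extending by the remaining $12-|S|-1$ vertices is impossible without creating one of $K_{3,2}$, $K_6\setminus C_5$, $K_6\setminus H_6$, or $K_7\setminus H_7$ in one of the colors — here $K_{3,2}$ is the workhorse, since forbidding an induced $K_{3,2}$ (and its complement $K_3\cup 2K_1$... more precisely $\overline{K_{3,2}}=K_3+\overline{K_2}$, hmm, $\overline{K_{3,2}}$ is $K_3\sqcup K_2$ — a triangle disjoint from an edge) is an extremely rigid condition that pins down how a new vertex may attach to any already-present triangle or to $C_5$.

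The main obstacle I anticipate is the bookkeeping in the extension step: once $S$ and its few possible internal colorings are fixed, each additional vertex has $2^{|S\cup\{v\}|}$ a priori attachment patterns, and ruling all of them out by hand requires organizing the argument around a small number of invariants (e.g. ``no vertex sees a red $C_5$ in exactly $k$ red edges for the bad values of $k$'', and ``the red triangles form a linear hypergraph''). I would structure the writeup as: (i) a lemma listing all $(4,4)$-Ramsey graphs on $6$, $7$, $8$ vertices and noting $9$ is impossible (classical); (ii) a lemma on how a vertex can attach to $C_5$ without making $K_{3,2}$ or $K_6\setminus C_5$ in either color; (iii) the main argument picking the high-degree vertex, identifying $S$, and closing each case. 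Steps (i) and (ii) are short and self-contained; step (iii) is where the real work lies, but the $12$-vertex bound (as opposed to $10$) gives enough slack that the branching stays shallow — this is precisely why $\Ch(3)\le 12$ admits a computer-free proof while $\le 10$ does not.
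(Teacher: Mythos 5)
Your overall strategy --- fix a vertex, pass to its larger monochromatic neighborhood $S$ with $|S|\ge 6$, bound $|S|$ by a Ramsey argument, classify the possible colorings on $S$, and then rule out extensions --- is essentially the skeleton of the paper's proof. But there are two genuine problems. First, your key structural claim is off: since the apex vertex $v$ is red-adjacent to every vertex of $S$, a red \emph{triangle} in $S$ already yields a red $K_4$ together with $v$, so the red graph on $S$ is triangle-free, i.e.\ $\omega\le 2$, not merely $\omega\le 3$. With the constraints you actually state ($\omega,\alpha\le 3$) the only bound available is $|S|\le R(4,4)-1=17$; your assertion that ``in fact'' $|S|\le 8$ does not follow from anything you wrote, and the class of graphs you propose to enumerate ($(4,4)$-Ramsey graphs, ``Paley-type'' on $8$ vertices) is the wrong one. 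The correct constraint --- triangle-free with $\alpha\le 3$ --- is what gives $|S|\le 8$ via $R(3,4)=9$, and it is also what makes the case analysis tractable: a triangle-free graph on $6$--$8$ vertices with no independent $4$-set is either bipartite (immediately fatal when $|S|\ge 7$, since one side is a red $K_4$ in the complementary color, and highly constrained when $|S|=6$) or contains an induced $C_5$ or $C_7$, and those induced odd cycles are exactly what combine with the apex and the forbidden graphs $K_6\setminus C_5$, $K_6\setminus H_6$, $K_7\setminus H_7$ to finish. Without the triangle-freeness you lose the odd-cycle structure that the whole argument runs on.

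Second, even granting the corrected setup, the proposal stops where the proof begins: your ``step (iii)'' --- showing that no attachment of the remaining vertices avoids all five patterns --- is only described, not executed, and you yourself flag it as the part where ``the real work lies.'' In the paper this step occupies the entire case analysis: for $|S|=7$ one shows each vertex of the small side attaches to the induced $C_7$ in a consecutive block of $3$ or $4$ vertices (else a $K_4$, $K_{3,2}$, or $K_6\setminus C_5$ appears in some color), then plays two such vertices against each other to force a monochromatic $K_6\setminus C_5$ or an induced $K_6\setminus H_6$; for $|S|=6$ one pits a triangle of one color in $S$ against a triangle of the other color on the small side and is eventually forced into an induced $K_7\setminus H_7$. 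None of these configurations is identified in your sketch, and the specific roles of $K_6\setminus H_6$ and $K_7\setminus H_7$ --- which are needed precisely because $K_4$, $K_{3,2}$, and $K_6\setminus C_5$ alone do not suffice --- are never engaged. As written, the proposal is an outline with a flawed Ramsey reduction and the decisive combinatorial work missing.
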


\begin{proof}
    Fix a 2-coloring of $E(K_{12})$ in red and blue. Assume for the sake of contradiction that this coloring contains no monochromatic induced subgraph isomorphic to one of $K_4$, $K_{3,2}$, $K_6\setminus C_5$, $K_6\setminus H_6$, or $K_7\setminus H_7$. Fix an arbitrary vertex $o$, and consider its red and blue neighborhoods $A=N_r(o)$ and $B=N_b(o)$. Without loss of generality, let $|A|\le |B|$, so $|B|\ge 6$. We will break into cases based on the size of the blue neighborhood $B$.

\vspace{5pt}

\noindent \textbf{Case 1: $|B|\ge 9$.} Recall that $R(3,4)=9$, so we obtain a monochromatic $K_4$ in $B\cup \{o\}$ in this case.

\vspace{5pt}

\noindent \textbf{Case 2: $|B|=8$.} If $B$ contains a blue $C_3$ or a blue $C_5$, then $B\cup \{o\}$ contains a blue $K_4$ or an induced blue $K_6\setminus C_5$, and we are done. Otherwise, either $B$ contains a blue $C_7$, or the blue graph on $B$ is bipartite. In the latter case, one of the two parts in the bipartition has size at least $4$, yielding a red $K_4$. 

Thus, we can assume that $B$ contains a blue $C_7$. We can further assume this $C_7$ is induced (otherwise $B$ would also contain a $C_3$ or $C_5$ in blue). Let the vertices of this blue cycle be $v_1,\dots,v_7$ in order. For convenience, we let $v_{i+7}=v_i$ for all $i$. Let $v^*$ be the eighth vertex of $B$. If $v^*$ is adjacent in blue to a pair of consecutive vertices $v_i, v_{i+1}$ of the blue $C_7$, then $\{o,v_i,v_{i+1},v^*\}$ induces a blue $K_4$. Otherwise, it is easy to check that $v^*$ is adjacent in red to three pairwise nonconsecutive vertices of the induced blue $C_7$, meaning we have a red $K_4$ and are again done.

\vspace{5pt}

\noindent \textbf{Case 3: $|B|=7$.} As in the previous case, we can reduce to the case where there is an induced blue $C_7$ in $B$ with vertices $v_1,\dots,v_7$ in order, and let $v_{i+7}=v_i$ for all $i$ for convenience.

\begin{figure}[h]
    \centering
    \begin{tikzpicture}[scale=0.5]
      \foreach \i [evaluate=\i as \ang using {(\i-1)*360/7 + 90}] in {1,2,3,4,5,6,7}{
        \coordinate (v\i) at (\ang:3);
        \node at (\ang:3.5) {$v_\i$};
        \fill (v\i) circle (2pt);
      }
      \coordinate (v8) at (-8,-2);
      \node at (-8,-2) [below] {$w$};
      \fill (v8) circle (2pt);
      \foreach \i/\j in {1/2, 2/3, 3/4, 4/5, 5/6, 6/7, 7/1}{
        \draw[thick,blue] (v\i) -- (v\j);
      }
	  \foreach \i/\j in {8/7, 8/1, 8/4, 4/7, 4/1, 4/2, 4/6, 2/7, 1/6, 2/6}{
        \draw[thick,red] (v\i) -- (v\j);
      }
    \end{tikzpicture}
    \caption{Case 3, red $K_6\setminus C_5$ centered at $v_4$
    }
    \label{fig:Case3a}
\end{figure}
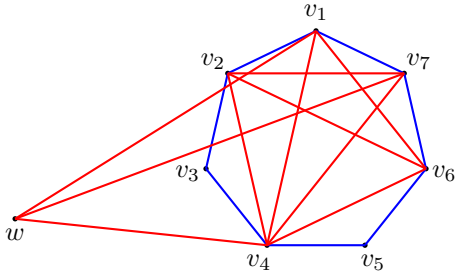

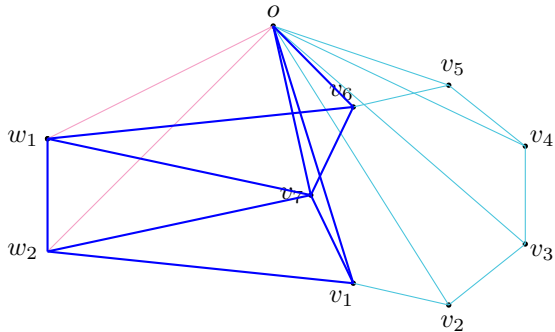
\begin{figure}[h]
    \centering
    \begin{tikzpicture}[scale=0.5]
      \foreach \i [evaluate=\i as \ang using {(\i)*360/7 + 180}] in {1,2,3,4,5,6,7}{
        \coordinate (v\i) at (\ang:3);
        \node at (\ang:3.5) {$v_\i$};
        \fill (v\i) circle (2pt);
      }
      \coordinate (w1) at (-10,1.5);
      \node at (-10,1.5) [left] {$w_1$};
      \fill (w1) circle (2pt);
      \coordinate (w2) at (-10,-1.5);
      \node at (-10,-1.5) [left] {$w_2$};
      \fill (w1) circle (2pt);
      \coordinate (v8) at (-4,4.5);
      \node at (-4,4.5) [above] {$o$};
      \fill (v8) circle (2pt);
	   \foreach \i/\j in {8/1,8/2}{
        \draw[Lavender] (v\i) -- (w\j);
      }
      \foreach \i/\j in {1/2, 2/3, 3/4, 4/5, 5/6, 8/2, 8/3, 8/5, 8/4}{
        \draw[SkyBlue] (v\i) -- (v\j);
      }
      \foreach \i/\j in {8/1,8/7,8/6, 6/7, 7/1}{
        \draw[thick,blue] (v\i) -- (v\j);
      }
      \draw[thick,blue] (w1) -- (w2);
	  \foreach \i/\j in {6/1,7/1,7/2,1/2}{
        \draw[thick,blue] (v\i) -- (w\j);
      }
    \end{tikzpicture}
    \caption{Case 3, blue $K_6\setminus C_5$ on $\{o,w_1,w_2,v_6,v_7,v_1\}$ centered at $v_7$
    }
    \label{fig:Case3b}
\end{figure}

\begin{figure}[ht]
    \centering
    \begin{tikzpicture}[scale=0.5]
      \foreach \i [evaluate=\i as \ang using {(\i+0.5)*360/7 + 180}] in {1,2,3,4,5,6,7}{
        \coordinate (v\i) at (\ang:3);
        \node at (\ang:3.5) {$v_\i$};
        \fill (v\i) circle (2pt);
      }
      \coordinate (w1) at (-10,1.5);
      \node at (-10,1.5) [left] {$w_1$};
      \fill (w1) circle (2pt);
      \coordinate (w2) at (-10,-1.5);
      \node at (-10,-1.5) [left] {$w_2$};
      \fill (w1) circle (2pt);
      \coordinate (v8) at (-3.5,4.5);
      \node at (-3.5,4.5) [above] {$o$};
      \fill (v8) circle (2pt);
      \foreach \i/\j in {1/2, 2/3, 3/4, 4/5, 5/6, 6/7, 7/1, 8/2, 8/3, 8/5, 8/7}{
        \draw[SkyBlue] (v\i) -- (v\j);
      }
      \foreach \i/\j in {5/1,7/2,2/2}{
        \draw[Lavender] (v\i) -- (w\j);
      }
    	  \foreach \i/\j in {5/2,7/1,2/1}{
        \draw[SkyBlue] (v\i) -- (w\j);
      }
      \foreach \i/\j in {8/1,8/4,8/6}{
        \draw[thick,blue] (v\i) -- (v\j);
      }
      \foreach \i/\j in {6/1,1/4,4/6}{
        \draw[thick,red] (v\i) -- (v\j);
      }
      \draw[thick,blue] (w1) -- (w2);
	  \foreach \i/\j in {6/1,4/1,1/2,8/1,8/2}{
        \draw[thick,red] (v\i) -- (w\j);
      }
	  \foreach \i/\j in {6/2,4/2,1/1}{
        \draw[thick,blue] (v\i) -- (w\j);
      }
    \end{tikzpicture}
    \caption{Case 3, induced blue $K_6\setminus H_6$ on $\{o,w_1,w_2,v_1,v_4,v_6\}$
    }
    \label{fig:Case3c}
\end{figure}
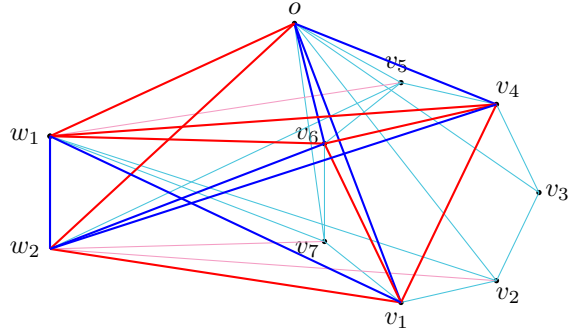

Fix a vertex $w\in A$, and consider the colors of its edges to the vertices of $B$. For every triple of pairwise nonconsecutive vertices of the blue $C_7$ in $B$, say $v_i,v_{i+2},v_{i+4}$, if $w$ is connected to all three in red, then we obtain a red $K_4$; if instead $w$ is connected to all three in blue, then $\{o,w,v_i,v_{i+2},v_{i+4}\}$ forms an induced blue $K_{3,2}$. 
Similarly, if $w$ is adjacent to $v_i,v_{i+1},v_{i+4}$ in red, then $\{w,v_{i+4},v_i,v_{i+1},v_{i+2},v_{i+6}\}$ forms a red copy of $K_6\setminus C_5$ centered at $v_{i+4}$ (see \Cref{fig:Case3a}), which either is induced or contains a red $K_4$. We can thus assume that none of these scenarios occur, from which we can conclude that the red neighbors of $w$ in $B$ form a consecutive block along the blue $C_7$ and that they number either $3$ or $4$ (and equivalently, the same is true of the blue neighbors of $w$ in $B$).

Since $|A|\ge 3$, there must be two vertices $w_1,w_2\in A$ such that the edge between them is blue (otherwise there would be a red $K_4$ among $A\cup \{o\}$). If $w_1,w_2$ have a common blue neighbor in $B$, then since their blue neighborhoods in $B$ each forms a block of consecutive vertices along the blue $C_7$, there is some $i$ such that (without loss of generality) $w_1$ is connected in blue to $v_i$ and $v_{i-1}$, while $w_2$ is connected in blue to $v_i$ and $v_{i+1}$. Then $\{w_1,w_2,o,v_{i-1},v_i,v_{i+1}\}$ forms a blue copy of $K_6\setminus C_5$ centered at $v_i$ (see \Cref{fig:Case3b}). Thus the blue neighborhoods of $w_1,w_2$ in $B$ must be disjoint.

We can then find some $i$ such that (without loss of generality) $v_i,v_{i+1},v_{i+2}$ are in the blue neighborhood of $w_1$ and the red neighborhood of $w_2$, while $v_{i-1},v_{i-2},v_{i-3}$ are in the blue neighborhood of $w_2$ and the red neighborhood of $w_1$. Then $\{o,w_1,w_2,v_{i+1},v_{i-1},v_{i-3}\}$ forms an induced blue copy of $K_6\setminus H_6$ (see \Cref{fig:Case3c}).

\vspace{5pt}

\noindent \textbf{Case 4: $|B|=6$.} In this case, since there again cannot be any blue $C_3$ or $C_5$ or red $K_4$ in $B$, the blue graph on $B$ must be bipartite with exactly $3$ vertices in each part. Likewise, the red graph on $A$ is bipartite with $3$ vertices in one part and $2$ in the other. Fix a red triangle $T=\{v_1,v_2,v_3\}\subset B$ and a blue triangle $S=\{w_1,w_2,w_3\}\subset A$.  

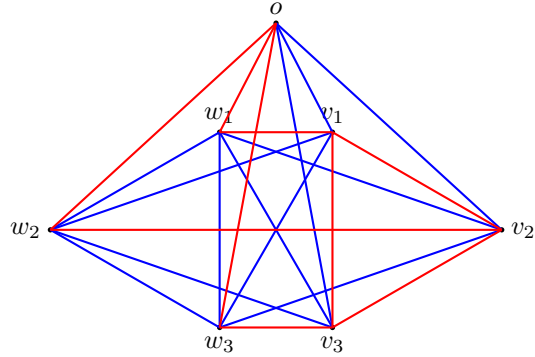
\begin{figure}[h]
    \centering
    \begin{tikzpicture}[scale=0.5]
      \coordinate (v1) at (60:3);
      \node at (60:3) [above] {$v_1$};
      \fill (v1) circle (2pt);
      \coordinate (v2) at (0:6);
      \node at (0:6) [right] {$v_2$};
      \fill (v2) circle (2pt);
      \coordinate (v3) at (-60:3);
      \node at (-60:3) [below] {$v_3$};
      \fill (v3) circle (2pt);
      \coordinate (v4) at (120:3);
      \node at (120:3) [above] {$w_1$};
      \fill (v4) circle (2pt);
      \coordinate (v5) at (180:6);
      \node at (180:6) [left] {$w_2$};
      \fill (v5) circle (2pt);
      \coordinate (v6) at (-120:3);
      \node at (-120:3) [below] {$w_3$};
      \fill (v6) circle (2pt);

      \coordinate (v7) at (0,5.5);
      \node at (0,5.5) [above] {$o$};
      \fill (v7) circle (2pt);
      \foreach \i/\j in {7/1,7/2,7/3,4/5,5/6,6/4,1/5,2/4,3/4,3/5,1/6,2/6}{
        \draw[thick,blue] (v\i) -- (v\j);
      }
      \foreach \i/\j in {7/4,7/5,7/6,1/2,2/3,3/1,1/4,2/5,3/6}{
        \draw[thick,red] (v\i) -- (v\j);
      }
    \end{tikzpicture}
    \caption{Case 4, induced blue $K_7\setminus H_7$
    }
    \label{fig:Case4}
\end{figure}

If any $w\in S$ is connected in red to all of $T$, then we obtain a red $K_4$; if $w$ is connected in blue to all of $T$, then $\{o,w\}\cup T$ forms an induced blue $K_{3,2}$. Thus every vertex of $S$ must have both red and blue edges to $T$, and vice versa. The blue neighborhoods in $T$ of the vertices of $S$ then cannot be ordered by inclusion (else some vertex is in all of them, i.e. some vertex of $T$ is connected in blue to all of $S$), which means there are two vertices in $S$, say $w_1,w_2$, and two vertices in $T$, say $v_1,v_2$, such that $w_1,v_1$ and $w_2,v_2$ are adjacent in red, while $w_1,v_2$ and $w_2,v_1$ are adjacent in blue. Note that $\{o,w_1,v_1,v_2,w_2\}$ forms an induced red $C_5$. In order for $\{o,w_1,v_1,v_2,w_2,v_3\}$ not to form an induced red $K_6\setminus H_6$, the edges from $v_3$ to $w_1,w_2$ must be both red or both blue, and likewise for the edges from $w_3$ to $v_1,v_2$. Neither pair of edges can agree in color with the edge between $v_3$ and $w_3$, which by symmetry we can assume without loss of generality to be red. Thus the coloring on $\{o\}\cup S\cup T$ is as depicted in \Cref{fig:Case4}. But this yields an induced blue $K_7\setminus H_7$, a contradiction. Thus in every case, we cannot avoid all of our forbidden induced subgraphs in a $2$-coloring of the edges of $K_{12}$.
\end{proof}

\end{document}